\DeclareMathOperator{\Ad}{Ad}
\DeclareMathOperator{\ad}{ad}
\DeclareMathOperator{\AOp}{AOp}
\DeclareMathOperator{\Aut}{Aut}
\DeclareMathOperator{\Bun}{Bun}
\DeclareMathOperator{\Gr}{Gr}
\DeclareMathOperator{\Ind}{Ind}
\DeclareMathOperator{\Ker}{Ker}
\DeclareMathOperator{\Map}{Map}
\DeclareMathOperator{\Pic}{Pic}
\DeclareMathOperator{\Res}{Res}
\DeclareMathOperator{\rk}{rk}
\DeclareMathOperator{\Spec}{Spec}
\DeclareMathOperator{\Vir}{Vir}
\newcommand{\A}{\mathbf{A}}
\newcommand{\C}{\mathbf{C}}
\newcommand{\bO}{\mathbf{O}}
\newcommand{\bP}{\mathbf{P}}
\newcommand{\cA}{\mathcal{A}}
\newcommand{\cE}{\mathcal{E}}
\newcommand{\cH}{\mathcal{H}}
\newcommand{\cL}{\mathcal{L}}
\newcommand{\cM}{\mathcal{M}}
\newcommand{\cO}{\mathcal{O}}
\newcommand{\cV}{\mathcal{V}}
\newtheorem{thm}{Theorem}
\newaliascnt{lm}{thm}
\newaliascnt{prop}{thm}
\newaliascnt{cor}{thm}
\newtheorem{lm}[lm]{Lemma}
\newtheorem{prop}[prop]{Proposition}
\newtheorem{cor}[cor]{Corollary}
\theoremstyle{remark}
\newtheorem*{rem}{Remark}
\theoremstyle{definition}
\newtheorem*{defn}{Definition}
\begin{document}

\title{Virasoro constraints in Drinfeld--Sokolov hierarchies}
\author{Pavel Safronov}
\date{}
\email{\href{mailto:psafronov@math.utexas.edu}{psafronov@math.utexas.edu}}
\address{Department of Mathematics, University of Texas, Austin, TX 78712}
%\classification{14H70}
\begin{abstract}
We describe a geometric theory of Virasoro constraints in generalized Drinfeld--Sokolov hierarchies. Solutions of Drinfeld--Sokolov hierarchies are succinctly described by giving a principal bundle on a complex curve together with the data of a Higgs field near infinity. String solutions for these hierarchies are defined as points having a big stabilizer under a certain Lie algebra action. We characterize principal bundles coming from string solutions as those possessing connections compatible with the Higgs field near infinity. We show that tau-functions of string solutions satisfy second-order differential equations generalizing the Virasoro constraints of 2d quantum gravity.
\end{abstract}
\maketitle

%\textbf{Keywords} Integrable systems, KdV hierarchy, Virasoro constraints

%\textbf{Mathematics Subject Classification} 14H70, 37K10

\setcounter{section}{-1}
\section{Introduction}
\subsection{}

Drinfeld--Sokolov hierarchies are generalizations of the so-called Korteweg--de Vries (KdV) hierarchy -- that is, a family of nonlinear PDEs -- to an arbitrary semisimple Lie group $G$. For example, the first nontrivial equation of the $G=SL_2\C$ hierarchy is the well-known KdV equation
\[4u_t = u_{xxx} + 6uu_x\]
for a function $u(t,x)$. The whole KdV hierarchy can be conveniently rephrased in terms of the Schr\"{o}dinger operator $L=\partial_x^2+u(x)$ known as the Lax operator in this case. One can similarly obtain the Gelfand--Dikii, or the nKdV hierarchies, by considering an $n$-th order differential operator $L=\partial_x^n+u_{n-2}(x)\partial_x^{n-2}+...+u_0(x)$. Drinfeld and Sokolov's idea \cite{DS} was to replace the differential operator $L$ by a special connection known as a $G$-oper. Thus, the phase space of Drinfeld--Sokolov hierarchies is the moduli space of $G$-opers on the formal disk, and one recovers the original nKdV phase space when $G=SL_n\C$. We will follow a description of \textit{generalized} Drinfeld--Sokolov hierarchies due to Ben-Zvi and Frenkel \cite{BZF1}, which starts with a data of a smooth projective curve $X$, a semisimple group $G$ and a maximal torus $A\subset LG$ of the loop group called a Heisenberg subgroup. The phase space of these hierarchies is the moduli space of affine opers on the formal disk. Considering $X=\C\bP^1$ and $A$ the principal Heisenberg subgroup one recovers the original Drinfeld--Sokolov phase space.

Alternatively, we can also generalize the picture from a different point of view. Using the inverse scattering method one can replace the ``coordinate'' $u(x)$ on the phase space of the KdV hierarchy by the spectral properties of the corresponding Schr\"{o}dinger operator $L$. In the situation we are considering here we get a rank 2 vector bundle on $\A^1$, whose fiber at $\lambda\in\A^1$ is the eigenspace $\Ker(L-\lambda)$. This vector bundle can be extended to the whole $\C\bP^1$ in a canonical way; moreover, the extension carries an extra structure near infinity: the vector bundle comes as a pushforward from the trivial line bundle on a fully-ramified spectral cover $\Spec \C\llbracket\lambda^{-1/2}\rrbracket\rightarrow \Spec \C\llbracket\lambda^{-1}\rrbracket$. This picture has an extension to generalized Drinfeld--Sokolov hierarchies by considering the moduli space (known as the abelianized Grassmannian) of $G$-bundles on $X$ together with a reduction to the Heisenberg subgroup $A$ near a marked point. Ben-Zvi and Frenkel defined a natural isomorphism between the oper description of the Drinfeld--Sokolov hierarchy and the spectral description via bundles on curves. The spectral description of integrable hierarchies has an attractive quality that the time evolution is linearized: namely, the Drinfeld--Sokolov flows simply change the data of a reduction near the marked point.

The abelianized Grassmannian is a scheme of infinite type, so to get a handle on this space we would like to be able to describe certain special points. These points also admit an interesting geometric description we are about to describe.

While studying Gelfand--Dikii hierarchies, Krichever considered special Lax operators $L$, such that there is another differential operator $M$, which commutes with $L$ and has a coprime order. He called these Lax operators \textit{algebro-geometric}, since they can be recovered from the data of the spectral curve $\Spec \C[L, M]$ together with a line bundle over it, whose fiber over $(\lambda, \mu)$ is the joint eigenspace $\Ker(L-\lambda)\cap\Ker(M-\mu)$. The reader is referred to \cite{Mu} for a gentle introduction to the classical approach to these solutions.

Instead, we will follow a more general approach to algebro-geometric solutions of generalized Drinfeld--Sokolov hierarchies outlined in \cite{BZF1}. They are described in terms of abstract Higgs bundles of Donagi, such that the cameral cover near the marked point on $X$ is identified with that of the Heisenberg subgroup $A$. The orbits of Drinfeld--Sokolov flows are finite-dimensional and can be identified with torsors over Picard and Prym varieties associated to the cameral covers. Informally, one can say that algebro-geometric solutions admit a sufficiently big stabilizer under the Drinfeld--Sokolov flows. Essentially, the only known solutions of Drinfeld--Sokolov hierarchies are algebro-geometric.

In addition to algebro-geometric solutions we have the so-called \textit{string} solutions, the simplest of which is $u(x,t) = -\frac{2x}{3t}$ in the KdV case. The corresponding Lax operators $L$ admit a differential operator $P$, such that $[P, L] = 1$ (the string equation). The famous conjecture of Witten, proved by Kontsevich, states that the partition function of 2d quantum gravity (Gromov--Witten potential of a point) satisfies Virasoro constraints and the equations of the KdV hierarchy. In fact, this partition function is the tau-function associated to the solution written above. As Kac and Schwarz have shown \cite{KS} (see also \cite{AvM}), in the case of nKdV hierarchies, imposing the string equation, the KdV hierarchy, and certain analyticity assumptions is enough to fix the solution uniquely, so that the tau-function will satisfy the Virasoro constraints. The reference \cite{Sch} discusses more general string solutions of nKdV hierarchies.

In this paper we generalize the notion of string solutions from nKdV hierarchies to generalized Drinfeld--Sokolov hierarchies for arbitrary base curves and arbitrary Heisenbergs. Just as algebro-geometric solutions had a sufficiently big stabilizer under the Drinfeld--Sokolov flows, we define a bigger Lie algebra action on the phase space for \textit{varying} curves; the string solutions then possess a big stabilizer under the action of the latter Lie algebra. The goal of the paper is to understand what kind of geometry is behind these solutions and to prove the Virasoro constraints (certain second-order differential equations) on the tau-functions of string solutions.

The plan of the paper is as follows. In \autoref{sect:background} we review background material on Heisenberg subgroups and abstract Higgs bundles. We then proceed to define the phase space of generalized Drinfeld--Sokolov hierarchies together with Drinfeld--Sokolov flows for a fixed curve $X$ and in moduli. A geometric definition of tau-functions for generalized Drinfeld--Sokolov hierarchies was missing in the literature, so we give their brief description in \autoref{sect:tau} relating, in particular, to a more algebraic definition in the case $X=\C\bP^1$ that can be found in \cite{Wu}. Note, that in this paper we only consider a linear action of the Virasoro algebra on the tau-function. We do not know a geometric meaning of the non-linear symmetries that appear in \textit{loc. cit.}

In \autoref{sect:ag} we discuss algebro-geometric solutions for generalized Drinfeld--Sokolov hierarchies in a way that will be easily generalized to string solutions. We also discuss the linear differential equations the algebro-geometric tau-functions satisfy.

\autoref{sect:string} is the main part of the paper. We define string solutions as points having a big stabilizer under the Heisenberg-Virasoro Lie algebra acting on the phase space. We then give a geometric description of string solutions (\autoref{prop:virconnection}) as spectral bundles with a connection, which has a standard structure near the marked point. Generalizing a theorem of F. Plaza Mart\'{i}n \cite[Theorem 3.1]{PM} from the case of nKdV hierarchies to generalized Drinfeld--Sokolov hierarchies, we prove that algebro-geometric and string conditions are mutually exclusive if the base curve $X$ has genus 0 (\autoref{prop:virag}). To obtain Virasoro constraints on string tau-functions, we first define the Sugawara embedding for arbitrary Heisenbergs. Using the Sugawara currents we define an action of the negative part of the Virasoro algebra $H^0(X\backslash\infty, T_X)$ on the space of tau-functions. When $G$ is simply-laced, the space of tau-functions is one-dimensional. Moreover, all finite-dimensional representations of the negative part of the Virasoro algebra in this case are trivial since it is simple, thus we get Virasoro constraints, i.e. second-order differential equations on string tau-functions. This is the content of \autoref{prop:tauvir}. Note, that we prove the Virasoro constraints in the greatest possible generality: for all simply-laced groups, all Heisenbergs and all curves. We conclude the paper with a description of string solutions in terms of opers.

\subsection{Acknowledgements}
First and foremost, the author would like to thank his advisor, David Ben-Zvi, without whose guidance and careful explanations this work would not be completed. The author also thanks Kevin Costello for useful discussions and the referee for pointing out a mistake in the definition of algebro-geometric solutions.

\section{Drinfeld--Sokolov hierarchies}

\label{sect:background}

\subsection{Loop groups}

\subsubsection{}

Let $D=\Spec \C\llbracket z\rrbracket$ be the formal disk and $D^\times=\Spec\C(\!(z)\!)$ the punctured disk.

For $G$ a complex connected semisimple group we define the loop group $LG$ to be the group of maps $\Map(D^\times, G)$ under pointwise multiplication. It has a natural structure of an ind-scheme. Similarly, we define the positive loop group $LG_+$ to be the group scheme of loops $\Map(D, G)$ that extend over the puncture.

\begin{rem}
It will be useful to think of $LG$ and $LG_+$ as group schemes that we denote by $\underline{G}$ with constant fibers $G$ over $D^\times$ and $D$ respectively.
\end{rem}

\subsection{Heisenberg subgroups}

\label{sect:heisenberg}

\subsubsection{}
As all Cartan subgroups of $G$ are conjugate, the variety of all Cartan subgroups is isomorphic to $G/N(H)$ for some fixed Cartan subgroup $H\subset G$, where $N(H)$ is the normalizer of $H$ in $G$.

The projection $\underline{\cH}:=G\times^{N(H)}H\rightarrow G/N(H)$ realizes $\underline{\cH}$ as the universal group scheme whose fiber at a point $x\in G/N(H)$ is the corresponding Cartan subgroup. We have a map $G\times^{N(H)} H\rightarrow G$ given by $(g, h)\mapsto ghg^{-1}$ which realizes $\underline{\cH}$ as a subgroup scheme of the trivial group scheme $G\times G/N(H)$ over $G/N(H)$.

\begin{defn}
A \textit{Heisenberg subgroup} $A\subset LG$ is a maximal torus in $LG$ as a group scheme over $D^\times$.
\end{defn}

In other words, one has a classifying map $C_A\colon D^\times\rightarrow G/N(H)$ and the group scheme $\underline{A}\rightarrow D^\times$ is obtained as a pullback $C_A^*\underline{\cH}$.

For $H\subset G$ a Cartan subgroup we have the homogeneous Heisenberg subgroup $A=LH$. The corresponding classifying map $C_A\colon D^\times\rightarrow G/N(H)$ is the constant map to the base point. In particular, it extends over the puncture. In general, however, the limiting group will not be a Cartan subgroup, but, still, in nice situations it will be a centralizer of a regular element.

The variety $G/N(H)$ parametrizes centralizers of regular semisimple elements of $\mathfrak{g}$, the Lie algebra of $G$. It has a compactification $\widetilde{G/N(H)}$ which parametrizes centralizers of regular elements of $\mathfrak{g}$. Moreover, it is equipped with a universal abelian group scheme which we also denote by $\underline{\cH}$. The action of $G$ on $G/N(H)$ extends to the compactification so that the group scheme $\underline{\cH}$ is $G$-equivariant.

\begin{defn}
A Heisenberg subgroup $A$ is called \textit{regular} if the corresponding classifying map $C_A\colon D^\times\rightarrow G/N(H)$ extends to a map $C_{A_+}\colon D\rightarrow \widetilde{G/N(H)}$.
\end{defn}

In this case there is an abelian group scheme $\underline{A}_+\rightarrow D$ extending $\underline{A}\rightarrow D^\times$.

\subsubsection{Higgs bundles and spectral curves}

We refer the reader to \cite{DG} for a comprehensive treatment of abstract Higgs bundles, here we will only sketch the necessary basic facts.

Let $X$ be a curve.

\begin{defn}
A \textit{regular Higgs bundle} $(P, \sigma)$ on $X$ is a $G$-torsor $P\rightarrow X$ together with a $G$-equivariant map $\sigma\colon P\rightarrow \widetilde{G/N(H)}$ called a Higgs field.
\end{defn}

For example, suppose $P$ is the trivial $G$-torsor $G\times D$ on the disk. Then by $G$-equivariance the map $\sigma$ is uniquely determined by its restriction to \[\{e\}\times D\subset G\times D\rightarrow \widetilde{G/N(H)}\] for $e\in G$ the identity element. In other words, a Higgs field on the trivial $G$-torsor on the disk is the same as a regular Heisenberg.

There is a natural $W=N(H)/H$-torsor $G/H\rightarrow G/N(H)$ which extends to a ramified $W$-cover $\widetilde{G/H}\rightarrow \widetilde{G/N(H)}$. The pullback of this $W$-cover along $\sigma$ is $G$-equivariant and hence descends to a $W$-cover of $X$ known as the \textit{cameral cover} $X[\sigma]$. We call a Higgs field $\sigma$ \textit{unramified} if the image of $\sigma$ lies inside of $G/N(H)\subset \widetilde{G/N(H)}$. In this case the cameral cover is unramified, and so is a $W$-torsor.

Given a regular Heisenberg $A$, we denote by $D[A]$ the cameral cover of the associated Higgs field on $D$. Similarly, for a not necessary regular Heisenberg we denote by $D^\times[A]$ the cameral cover of the associated Higgs field on the punctured disk $D^\times$.

\subsubsection{} For $A$ a regular Heisenberg, $G$-torsors induced from $\underline{A}_+$-torsors carry an extra structure. Indeed, suppose $E$ is an $\underline{A}_+$-torsor on the disk $D$. Then the induced $G$-torsor $P=E\times^{\underline{A}_+} G$ carries a subbundle of $\ad P$ consisting of regular centralizers. This family gives a section of $P\times^G \widetilde{G/N(H)}\rightarrow D$, which is the same as a $G$-equivariant map $P\rightarrow \widetilde{G/N(H)}$. But this is nothing else but a Higgs field. In fact, we have the following statement:
\begin{prop}[Donagi--Gaitsgory]
The category of $\underline{A}_+$-torsors on $D$ is equivalent to the category of Higgs bundles $(P, \sigma)$ together with an isomorphism of $W$-covers $D[\sigma]\stackrel{\sim}\rightarrow D[A]$.
\end{prop}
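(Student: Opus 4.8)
The plan is to construct an explicit quasi-inverse to the functor $\Phi\colon E\mapsto(E\times_{A_+}G,\ \sigma_E,\ \theta_E)$ sketched just above the statement: here $\sigma_E$ is the Higgs field cut out by the subbundle of regular centralisers in $\ad(E\times_{A_+}G)$, and $\theta_E\colon D[E\times_{A_+}G]\stackrel{\sim}{\rightarrow}D[A]$ is the tautological identification of $W$-covers, both being pulled back along $C_{A_+}$. The key input is the theory of the \emph{regular centraliser} group scheme of \cite{DG}: to a regular Higgs bundle $(P,\sigma)$ on a curve one functorially attaches a smooth commutative group scheme $J_\sigma$ which over the unramified locus is the centraliser of $\sigma$ inside $\ad P$, which acts on $P$ through the action of $\ad P$ on $P$, and whose formation together with this action depends only on the $W$-cover $X[\sigma]$ and its ramification data. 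Applied to $(P,\sigma)=(\underline G,C_{A_+})$ on $D$ this recovers $\underline A_+=C_{A_+}^*\underline{\cH}$ acting on $\underline G$ by left translations.

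For the quasi-inverse $\Psi$ I would argue as follows. Given a triple $(P,\sigma,\phi)$ with $\phi\colon D[P]\stackrel{\sim}{\rightarrow}D[A]$, the compatibility of $J_\sigma$ with the cameral cover promotes $\phi$ to a canonical isomorphism $J_\sigma\stackrel{\sim}{\rightarrow}\underline A_+$, so that $\underline A_+$ acts on $P$. The Higgs field $\sigma$ is the same as a reduction of $P$ to the normalisers of $J_\sigma$, the associated $W$-cover is $X[\sigma]=D[P]$, and $\phi$ trivialises it against $D[A]$; descending through $\phi$ the ensuing reduction of $P$ to $J_\sigma$ along $D[P]$ yields an $\underline A_+$-sub-torsor $E\subset P$ with $E\times_{A_+}G=P$ canonically. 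Equivalently, $E$ is the sheaf of those local sections $p$ of $P$ at which $\sigma$ and the $\phi$-marking of the cameral fibre take their standard values. One sets $\Psi(P,\sigma,\phi)=E$ and checks $\Phi\Psi\cong\mathrm{id}$ and $\Psi\Phi\cong\mathrm{id}$, which, once the reductions are in place, is bookkeeping; full faithfulness is then automatic, since a morphism of Higgs bundles compatible with the cameral trivialisations intertwines the $J_\sigma$-actions and hence restricts to an $\underline A_+$-equivariant map of sub-torsors, and conversely.

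The main obstacle is concentrated entirely at the special point $0\in D$, where $C_{A_+}$ meets the boundary $\widetilde{G/N(H)}\setminus(G/N(H))$ and the cameral cover $D[A]$ ramifies. Over $D^\times$ the statement is the classical correspondence between maximal tori in a reductive group scheme and $N(H)$-reductions refined by a $W$-cover, and there is nothing to do; the whole content is that the correspondence extends across $0$, where $\sigma$ is no longer an $N(H)$-reduction in the naive sense and the relevant centralisers cease to be tori. This is exactly what the ``regularisation'' (Kostant-section) part of \cite{DG} is designed to handle: it guarantees that $J_\sigma$ is a \emph{smooth} group scheme, that it acts simply transitively on the reduction of $P$ cut out by $\sigma$ and $\phi$, and that the descent through $\phi$ produces a genuine torsor rather than merely a sheaf -- and it is precisely at this point that the hypothesis that $A$ be \emph{regular} (so that $\underline A_+$ exists and is smooth over all of $D$) is used. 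I would therefore organise the proof as: establish the equivalence over $D^\times$ by hand, and extend it over $0$ by invoking the smoothness and flatness statements for the regular centraliser and for the cameral construction proved in \cite{DG}.
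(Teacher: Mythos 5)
The paper does not actually prove this proposition: it is quoted as a theorem of Donagi--Gaitsgory, and the only argument supplied is the construction of the forward functor $E\mapsto(E\times_{A_+}G,\sigma_E)$ in the paragraph preceding the statement. So there is no proof of the paper's to compare yours against; what you have written is a reconstruction of how the result follows from \cite{DG}, and as an outline it is sound. You correctly identify the forward functor, reduce the problem to constructing a quasi-inverse, and --- most importantly --- locate the entire difficulty at the point $0\in D$ where the cameral cover ramifies and $\sigma$ is no longer an $N(H)$-reduction. Two points would need to be made precise in a write-up. First, the input you describe as ``$J_\sigma$ depends only on the $W$-cover and its ramification data'' is really the theorem of \cite{DG} identifying the regular centralizer with the sheaf of $W$-equivariant maps from the cameral cover to $H$ satisfying the condition $\alpha(t)=1$ along the ramification locus of each root $\alpha$; this identification is itself one of the harder results there, and your isomorphism $J_\sigma\cong\underline{A}_+$ is the composite of two instances of it. Second, the Donagi--Gaitsgory classification does not give a canonical equivalence between Higgs bundles with fixed cameral cover and torsors over this group scheme: it exhibits the former as a \emph{torsor} over the Picard groupoid of the latter, shifted by a torsor built from the ramification. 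Your description of $E$ as ``the sections where $\sigma$ and the $\phi$-marking take their standard values'' is implicitly trivializing that shift using the distinguished object $(\underline{G},C_{A_+},\mathrm{id})$, and it is cleaner to say so outright: the equivalence is the simply transitive action of $\underline{A}_+$-torsors on this base point. With those two clarifications your plan does constitute a proof, modulo the results of \cite{DG} that the paper is in any case taking as given.
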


The Higgs field corresponding to $A$ on the punctured disk is unramified, so the description of $\underline{A}$-torsors is more explicit:
\begin{prop}
The following three categories are equivalent:
\begin{enumerate}
\item The category of $\underline{A}$-torsors on $D^\times$.

\item The category of unramified Higgs fields $(P,\sigma)$ on $D^\times$ together with an isomorphism of $W$-torsors $D^\times[\sigma]\stackrel{\sim}\rightarrow D^\times[A]$.

\item The category of $N(H)$-torsors $P_{N(H)}$ together with an isomorphism of the associated bundle \[P_{N(H)}\times^{N(H)} W\stackrel{\sim}\rightarrow D^\times[A].\]
\end{enumerate}
\end{prop}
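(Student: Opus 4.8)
The plan is to establish the equivalences by factoring the argument through the intermediate category (3), which is the most concrete, and then relating it to (1) and (2). The key input is the observation that $A$ being a regular \emph{unramified} Heisenberg means the classifying map $C_A : D^\times \to G/N(H)$ lands in the open locus parametrizing centralizers of regular semisimple elements, and that $G/H \to G/N(H)$ is an honest $W$-principal bundle over this locus. Pulling back along $C_A$ produces a $W$-torsor on $D^\times$, which is precisely $D^\times[A]$, and the preimage of the basepoint coset gives a canonical $N(H)$-torsor $\underline{N(H)}_A$ sitting inside $\underline{A}$ as the normalizer; in fact $\underline{A}$ is the connected component of $\underline{N(H)}_A$, or rather the Cartan group scheme of which $\underline{N(H)}_A$ is the normalizer. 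This sets up the comparison.

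First I would prove the equivalence of (1) and (3). Given an $\underline{A}$-torsor $E$ on $D^\times$, I would produce an $N(H)$-torsor by the standard device of choosing, \'etale-locally, a $W$-cover trivialization: the $W$-torsor $D^\times[A]$ is locally split, and over a splitting $\underline{A}$ becomes the constant group $H$, so $E$ becomes an $LH$-torsor which one induces up along $H \hookrightarrow N(H)$; the $W$-equivariance of the gluing data is exactly the descent datum making this well-defined, and it comes equipped with the required isomorphism $P_{N(H)} \times_{N(H)} W \xrightarrow{\sim} D^\times[A]$ because $N(H)/H = W$. Conversely, from an $N(H)$-torsor $P_{N(H)}$ with such a trivialization of its associated $W$-torsor, one builds the $\underline{A}$-torsor as follows: the trivialization identifies the $W$-torsor with $D^\times[A]$, and then $\underline{A}$ is recovered as $P_{N(H)} \times_{N(H)} \underline{\cH}$ pulled back appropriately — more precisely, the group scheme $\underline{A}$ acts on $P_{N(H)}$ through the inclusion $\underline{A}_x \subset N(H)_x$ of each Cartan fiber into its normalizer, and the reduction of structure group from $\underline{N(H)}_A$ to $\underml{A}$ lives over exactly the data of the $W$-trivialization. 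I would check these two constructions are mutually inverse as functors, which is a routine diagram chase once the dictionary is set up.

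Next I would relate (2) and (3), which is the genuinely geometric step and where the Donagi-Gaitsgory picture enters. An unramified Higgs field $(P,\sigma)$ on $D^\times$ is a $G$-equivariant map $P \to G/N(H)$, equivalently a section of the associated bundle $P \times_G (G/N(H)) \to D^\times$; pulling back the $W$-bundle $G/H \to G/N(H)$ gives the cameral cover $D^\times[P]$, and the choice of isomorphism $D^\times[P] \xrightarrow{\sim} D^\times[A]$ over $D^\times$ rigidifies it. I would argue that such a section together with the $W$-trivialization is precisely the same as a reduction of the structure group of $P$ from $G$ to $N(H)$ — this is the standard fact that sections of $P \times_G (G/N(H))$ correspond to $N(H)$-reductions $P_{N(H)}$ with $P_{N(H)} \times_{N(H)} G \cong P$, and the compatible $W$-trivialization matches the two notions of ``$W$-cover'' on the nose. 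Composing with the equivalence of (1) and (3) then yields the full chain; I would also note this is the unramified specialization of the Donagi-Gaitsgory proposition quoted just above, so strictly one could deduce it from there, but the direct argument via $N(H)$-reductions is cleaner and makes the third description manifest.

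The main obstacle I anticipate is purely bookkeeping: keeping straight the several meanings of ``$W$-cover'' (the abstract cameral cover $D^\times[\sigma]$ of a Higgs field, the fixed reference $W$-torsor $D^\times[A]$ attached to the Heisenberg, and the associated bundle $P_{N(H)} \times_{N(H)} W$ of an $N(H)$-torsor) and verifying that the isomorphisms asserted in each of (1)--(3) are compatible under the functors, including on morphisms. There is no hard geometric content beyond the fact that $G/H \to G/N(H)$ is a $W$-principal bundle over the regular semisimple locus and that sections of homogeneous-space bundles are reductions of structure group; the work is in assembling these into a commuting triangle of categories and confirming full faithfulness, which I would do by exhibiting explicit quasi-inverses as above rather than by an abstract argument.
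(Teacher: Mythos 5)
The paper itself gives no proof of this proposition: it is stated as the unramified specialization of the Donagi--Gaitsgory equivalence quoted immediately before it, so there is nothing to compare against line by line. Your argument supplies exactly the content the paper leaves implicit, and the overall strategy --- using (3) as the hub, identifying unramified Higgs fields with sections of $P\times_G(G/N(H))$ and hence with $N(H)$-reductions for the $(2)\Leftrightarrow(3)$ step, and passing between $\underline{A}$-torsors and $N(H)$-torsors via the $W$-cover for the $(1)\Leftrightarrow(3)$ step --- is correct and is the standard route. Two small points of precision. First, on $D^\times$ every Heisenberg is automatically unramified (the classifying map lands in $G/N(H)$ by definition of a maximal torus); regularity only concerns the extension over the puncture and plays no role here, so the phrase ``regular unramified'' is a red herring. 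Second, your description of the quasi-inverse from (3) to (1) via ``the inclusion $\underline{A}_x\subset N(H)_x$ of each Cartan fiber into its normalizer'' is loose as written: $P_{N(H)}$ is a torsor under the \emph{constant} group $N(H)$, not under a varying normalizer. The clean formulation, which your descent description already contains implicitly, is that the given trivialization identifies $P_{N(H)}/H$ with $D^\times[A]$, making $P_{N(H)}\rightarrow D^\times[A]$ an $H$-torsor with $N(H)$-shifted $W$-equivariance, on which $\underline{A}=\Map_W(D^\times[A],H)$ acts fiberwise; this is precisely the dictionary the paper records when it identifies $A$ with $W$-equivariant gauge transformations on the cameral cover. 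With that adjustment your commuting triangle of equivalences goes through.
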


\subsubsection{}

Given a Heisenberg $A$, we can look at the monodromy of the cameral cover around a loop in $D^\times$. It gives a conjugacy class in the Weyl group $W$. As Kac and Peterson showed (\cite{KP}, see also \cite[Corollary 5.1.7]{BZF1} for the non simply-laced case), the $LG$-conjugacy classes of Heisenbergs are labeled by the conjugacy class of the monodromy. Roughly, the argument goes as follows. The set of Heisenbergs is isomorphic to the set of maps $\Map(D^\times, G/N(H))$. Therefore, the set of $LG$-conjugacy classes of Heisenbergs is isomorphic to the set of $N(H)$-torsors on $D^\times$. The extension of groups
\[1\rightarrow H\rightarrow N(H)\rightarrow W\rightarrow 1\]
shows that the isomorphism classes of $N(H)$-torsors on $D^\times$ whose induced $W$-torsors are isomorphic to a fixed one is a torsor over the group of $H$-torsors on $D^\times$. But since $H$ is a torus, the group of $H$-torsors is trivial by Hilbert's theorem 90.

\subsubsection{}

Consider the group scheme $\underline{A}\rightarrow D^\times$ for a Heisenberg $A$. One can view it as a family of groups over $D^\times$ with each one non-canonically isomorphic to a fixed Cartan $H$. To make the isomorphism canonical, consider the variety $G/H$, the universal cameral cover of $G/N(H)$, which parametrizes Cartan subgroups $H'\subset G$ together with an isomorphism $H'\cong H$ given by conjugation. The pullback of the universal group scheme $\cH=G\times^{N(H)} H\rightarrow G/N(H)$ to $G/H$ is isomorphic to $G\times^H H$. But since $H$ is abelian, $G\times^H H\cong G/H\times H$. In other words, the pullback of the universal group scheme $\cH\rightarrow G/N(H)$ to $G/H$ is isomorphic to the constant group scheme with fiber $H$.

Therefore, the pullback of $A$ to its cameral cover $D^\times[A]$ is $\Map(D^\times[A], G)$-conjugate to the homogeneous Heisenberg $\Map(D^\times[A], H)$. Moreover, we can identify $A$ as an abstract group (that is, forgetting the embedding $A\subset LG$) with the group of $W$-equivarant gauge transformations on the cameral cover $\Map_W(D^\times[A], H)$.

If $[s]$ is the conjugacy class of the monodromy in the Weyl group, then the cameral cover $D^\times[A]$ splits as a disjoint union of $h:1$ fully-ramified covers of $D^\times$, where $h$ is the order of $s$. In particular, we can construct a Heisenberg algebra $\mathfrak{a}$ as an abstract Lie algebra from $s$ in the following way. Let $t$ be a formal coordinate. Then the action of $s$ extends from $\mathfrak{h}$ to $\mathfrak{h}(\!(t)\!)$ by the following formula:
\[s.(at^m) = (s.a)e^{2\pi i m/h} t^m,\quad a\in\mathfrak{h}.\]
The Heisenberg $\mathfrak{a}$ is then obtained as the subspace $s$-fixed vectors in $\mathfrak{h}(\!(t)\!)$.

Given an $\underline{A}$-torsor $E\rightarrow D^\times$, the pullback $E[A]$ of $E$ to its own cameral cover $D^\times[A]$ is naturally an $H$-torsor.

Being a pullback to the $W$-cover, the underlying space $E[A]$ is $W$-equivariant. However, the $H$-torsor structure is only $N(H)$-shifted $W$-equivariant \cite{DG}. Concretely, this means that
\[w.(hx) = (whw^{-1})(w.x),\quad w\in W, h\in H, x\in E[A].\]

\subsubsection{} Recall that for $\pi\colon P\rightarrow X$ a $G$-torsor the adjoint bundle $\ad P$ is the vector bundle of $G$-invariant vertical vector fields on $P$, while the Atiyah bundle $\cA_P$ is the bundle of all $G$-invariant vector fields. Explicitly,
\[\ad P = (\pi_* T_{P/X})^G, \cA_P = (\pi_* T_P)^G.\]
They fit into an exact sequence
\[0\rightarrow \ad P\rightarrow \cA_P\rightarrow T_X\rightarrow 0\]
and a connection by definition is a splitting of this sequence.

The adjoint bundle $\ad E$ still makes sense for an $\underline{A}$-torsor $E\rightarrow D^\times$; however, the usual definition of the Atiyah bundle does not make sense for group schemes. We will define it in the following way. Recall that $E[A]$, the pullback of $E$ to its cameral cover $D^\times[E]\cong D^\times[A]$, is an $H$-torsor. We have the Atiyah sequence for $E[A]$
\[0\rightarrow \ad E[A]\rightarrow \cA_{E[A]}\rightarrow T_{D^\times[A]}\rightarrow 0\]
on $D^\times[A]$. Let $p\colon D^\times[A]\rightarrow D^\times$ be the projection. Pushing forward the sequence to $D^\times$ and taking $W$-invariants, we get a sequence
\[0\rightarrow \ad E\rightarrow (p_*\cA_{E[A})^W\rightarrow T_{D^\times}\rightarrow 0.\]

We define the Atiyah bundle of $E$ to be $\cA_E=(p_*\cA_{E[A]})^W$. Note that it was crucial that $\underline{A}$ becomes constant after pulling back to a finite cover.

The Atiyah sequence of $E$ induces a canonical connection $\nabla^{ad}$ on $\ad E$ in the following way. Let $v\in T_{D^\times}$ be a vector field and consider its lift $\tilde{v}\in \cA_E$. Then one can define
\[\nabla^{ad}_v a:= [\tilde{v}, a]\]
for any element $a\in\ad E$. This is well-defined since any two lifts $\tilde{v}$ differ by an element of $\ad E$, which is abelian. The Lie algebra $H^0(D^\times, T_{D^\times})$ is the Witt algebra with generators $L_n=-z^{n+1}\frac{\partial}{\partial z}$ for an integer $n$ and the commutation relations
\[[L_n, L_m] = (n-m)L_{n+m}.\]

The Lie algebra $H^0(D^\times, \ad E)\cong \mathfrak{a}$ is independent of the choice of the torsor $E$ and, in fact, so is the connection $\nabla^{ad}$. To see this it is enough to show this for the Atiyah sequence on the cameral cover $D^\times[A]$. But every $H$-torsor on $D^\times[A]$ is trivial and the action of $T_{D^\times[A]}$ on $\ad E[A]\cong \Map(D^\times[A], \mathfrak{h})$ is then simply the action of vector fields on functions. Therefore, the action of $H^0(D^\times, T_{D^\times[A]})$ on the Heisenberg $\mathfrak{a}\cong \Map_W(D^\times[A], \mathfrak{h})$ is given by lifting the vector field to a $W$-invariant vector field on the cameral cover and acting on $\Map_W(D^\times[A], \mathfrak{h})$.

Explicitly, if $\mathfrak{a}$ is the subspace of $s$-invariants in $\mathfrak{h}(\!(t)\!)$, the Witt algebra acts by
\[L_n =-\frac{t^{nh + 1}}{h}\frac{\partial}{\partial t}.\]
The generator $L_0$ induces a gradation $-\nabla^{ad}_{L_0}$ (note the minus sign) on $\mathfrak{a}$. From the explicit formula we see that this is a $\frac{1}{h}\mathbb{Z}$-grading.

Recall that an element $w\in W$ of the Weyl group is called \textit{elliptic} if it has no fixed vectors in the Cartan subalgebra $\mathfrak{h}$. For example, Coxeter elements in $W$ are elliptic.

\begin{prop}
The degree 0 subspace of a Heisenberg $\mathfrak{a}$ is trivial iff the monodromy is elliptic.
\label{prop:heisenbergcoxeter}
\end{prop}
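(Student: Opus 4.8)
The plan is to translate the proposition into a classical fact about Weyl group elements. Write $s\in W$ for a representative of the monodromy conjugacy class and $h$ for its order. The first step is to identify the degree-0 subspace explicitly. By the twisted realization recalled just above, $\mathfrak{a}$ is the space of $s$-fixed vectors in $\mathfrak{h}((t))$, where $s$ acts by $s.(at^m)=(s.a)e^{2\pi i m/h}t^m$ and $t$ is a coordinate on the $h:1$ cover, so $z=t^h$. Then $a\,t^m$ lies in $\mathfrak{a}$ precisely when $a$ lies in the $e^{-2\pi i m/h}$-eigenspace of $s$. Moreover $L_0=z\partial_z=\tfrac1h\,t\partial_t$, and since $\nabla^{ad}_{L_0}a=[\widetilde{L_0},a]$ acts on this model simply as the Euler operator $\tfrac1h\,t\partial_t$, we get $\nabla^{ad}_{L_0}(a\,t^m)=\tfrac mh\,a\,t^m$. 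Hence the degree-0 subspace of $\mathfrak{a}$ is exactly the $m=0$ summand, which is the fixed subspace $\mathfrak{h}^s$ of $s$ acting on $\mathfrak{h}$. (Naturality of $\nabla^{ad}$ makes this independent of the representative $s$ and of the coordinate.) The proposition is therefore equivalent to: $\mathfrak{h}^s=0$ if and only if $s$ lies in the Coxeter conjugacy class.

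For the ``if'' direction, suppose $s$ is a Coxeter element. Then its order is the Coxeter number $h$ and, by the classical theorem of Coxeter and Kostant (see e.g.\ Bourbaki, \emph{Lie groups and Lie algebras}, Ch.~V, \S6), its eigenvalues on $\mathfrak{h}$ are $e^{2\pi i m_1/h},\dots,e^{2\pi i m_\ell/h}$, where $m_1,\dots,m_\ell$ are the exponents of $W$. Since $0<m_j<h$ for every $j$, none of these equals $1$, so $\mathfrak{h}^s=0$ and the degree-0 subspace vanishes.

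The converse is the substantive half and, I expect, the main obstacle, because the bare equality $\mathfrak{h}^s=0$ only says that $s$ is elliptic, and one must promote this to membership in the Coxeter class. In type $A$ this promotion is elementary: for $W=S_n$ acting on $\C^n$, $\mathfrak{h}^s=0$ forces the fixed space in $\C^n$ to be one-dimensional, so $s$ is an $n$-cycle, which is a Coxeter element. For general $G$ the natural route is to use the extra rigidity of $\mathfrak{a}$: it comes from an honest Heisenberg of $LG$, so via the Donagi-Gaitsgory description \cite{DG} its cameral cover extends over the puncture, which forces $s$ to be a regular element of maximal order in Springer's sense; Springer's classification of regular conjugacy classes then identifies that class with the Coxeter one. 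Carrying out this reduction — converting the extension property into Springer-regularity of $s$, and pinning down the order so as to exclude the other elliptic (and regular) classes — is the step that genuinely uses the internal structure of $W$ rather than just $\mathfrak{h}^s=0$, and is where the real work lies.
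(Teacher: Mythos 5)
Your reduction is the same as the paper's: the paper also identifies the degree-$0$ subspace of $\mathfrak{a}$ with the subspace of $\mathfrak{h}$ fixed by the monodromy $s$ (phrased there as: a $\nabla^{ad}_{L_0}$-flat section is a locally constant $W$-equivariant map $D^\times[A]\to\mathfrak{h}$, hence a monodromy-fixed vector), and then appeals to the assertion that ``Coxeter elements are precisely characterized by the absence of fixed vectors.'' Your explicit computation in the $t$-model and your ``if'' direction via the eigenvalues $e^{2\pi i m_j/h}$, $0<m_j<h$, are fine and actually supply the half of that assertion which the paper leaves implicit.

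The converse, however, is where your proposal has a genuine gap, and the route you sketch does not close it. First, the proposition makes no regularity assumption on the Heisenberg, so you cannot invoke extension of the cameral cover over the puncture. Second, and more seriously, Springer-regularity together with ellipticity does not single out the Coxeter class: the element $-1$ in $W(B_2)$, $W(D_4)$, $W(E_8)$, etc.\ is regular, of order $2$, and has $\mathfrak{h}^{s}=0$, but is not a Coxeter element; likewise a regular element of order $4$ in $W(E_8)$ is elliptic (the exponents $1,7,11,13,17,19,23,29$ are all prime to $4$) but not Coxeter. So ``$\mathfrak{h}^{s}=0$ implies $s$ Coxeter'' is false for general $W$ (it does hold in type $A$, where elliptic means an $(n{+}1)$-cycle, which is your observation). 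Since by Kac--Peterson every conjugacy class of $W$ occurs as the monodromy of some Heisenberg, this is not a vacuous worry: you have in fact put your finger on a gap in the paper's own proof, which asserts the false biconditional characterization without argument. The statement needs either a restriction on the type, or ``Coxeter element'' weakened to ``elliptic element'' (i.e.\ $\mathfrak{h}^{s}=0$), which is all that the intended applications of \autoref{prop:heisenbergcoxeter} (such as \autoref{prop:virag}, which only assumes the principal Heisenberg) actually require. Your instinct that the converse is the substantive half is right, but your proposed reduction to Springer's classification does not supply a proof.
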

\begin{proof}
Suppose $\nabla^{ad}_{L_0} v=0$ for some $v\in\mathfrak{a}\cong\Map_W(D^\times[A], \mathfrak{h})$. The tangent bundle to $D^\times[A]$ is trivial, so the map $v$ is annihilated by any vector field.

Therefore, $v$ is a locally-constant map $D^\times[A]\rightarrow \mathfrak{h}$. Moreover, the image is fixed by the monodromy. We see that the degree 0 subspace of the Heisenberg coincides with the space of fixed vectors by the monodromy of $A$.
\end{proof}

\subsubsection{Examples}
\begin{enumerate}
\item Consider the homogeneous Heisenberg $A=LH$. In this case $\nabla^{ad}=d$ and so the gradation given by $-\nabla^{ad}_{L_0}$ is the homogeneous one. Clearly, $\mathfrak{h}\subset L\mathfrak{h}$ is the subspace of flat sections and the monodromy is trivial.

\item Choose Chevalley generators $e_1,...,e_n,f_1,...,f_n,h_1,...,h_n$ of $\mathfrak{g}$. If $\theta\in\mathfrak{h}^\vee$ is the maximal root, then $e_0=f_\theta\otimes z$, $f_0=e_\theta\otimes z^{-1}$ and $\{e_i,f_i,h_i\}_{i=1}^n$ are the Chevalley generators of $L\mathfrak{g}$.

Let $p_{-1}=\sum_{i=0}^n f_i$ be the principal nilpotent element of $L\mathfrak{g}$. Then the principal Heisenberg $A$ by definition is the centralizer of $p_{-1}$ in $LG$.

Let $h_{Cox}$ be the Coxeter number, i.e. the order of the Coxeter element. The principal gradation on $L\mathfrak{g}$ is given by assigning degree $1/h_{Cox}$ to $e_i$, degree 0 to $h_i$ and degree $-1/h_{Cox}$ to $f_i$. For example, $p_{-1}\in\mathfrak{a}$ has degree $-1/h_{Cox}$ and multiplication by $z $ increases the degree by $1$.

One has an explicit action of the Witt algebra given by
\[-\nabla^{ad}_{L_0} = z\frac{\partial}{\partial z} + \sum_{j=1}^n c_j h_j,\]
where $c_j$ are solutions to $\sum_{j=1}^n c_j a_{ij} = 1/h_{Cox}$ for $\{a_{ij}\}$ the Cartan matrix and $h_{Cox}$ the Coxeter number.

The monodromy of the cameral cover is a Coxeter element, so the Heisenberg has no degree 0 vectors. Indeed, suppose that $p\in\mathfrak{a}$ is an element of degree 0. Then $p=\sum_{j=1}^n b_j h_j$ for some $b_j\in\C$. Since $p$ centralizes $p_{-1}$,
\[-\sum_{i=1,j=1}^n b_j a_{ij} f_i + \sum_{j=1}^n b_j \theta(h_j) e_\theta=0.\]

Since $\mathfrak{g}$ is semisimple, the Cartan matrix $a_{ij}$ is nondegenerate. So, $b_j=0$, i.e. $p=0$.
\end{enumerate}

\subsection{Grassmannians}

In this section we define the phase space of the generalized Drinfeld--Sokolov hierarchy together with a collection of vector fields which make it into an integrable system.

\subsubsection{}

The input data for a generalized Drinfeld--Sokolov hierarchy consists of:
\begin{itemize}
\item A smooth projective curve $X$.

\item A semisimple simply-connected group $G$.

\item A choice of a regular Heisenberg subgroup $A\subset LG$.
\end{itemize}

The Drinfeld--Sokolov Grassmannian $\widehat{\Gr}_g$ is the moduli stack whose $S$-points parametrize the following data:
\begin{itemize}
\item A family of genus $g$ smooth projective curves $X\rightarrow S$ together with a marked point $\infty\colon S\rightarrow X$. We denote by $X_0=X\backslash\infty$ the affine part and $X_\infty$ the spectrum of the completed local ring at infinity.

\item A local coordinate $z\colon X_\infty\stackrel{\sim}\rightarrow D\times S$.

\item A $G$-torsor $P\rightarrow X$ together with a reduction of $P|_{X_\infty}$ to an $z^*\underline{A}_+$-torsor $E\rightarrow X_\infty$.

\item A trivialization $E\stackrel{\sim}\rightarrow z^*\underline{A}_+$.
\end{itemize}

To not obscure the notation, the local coordinate $z$ will be implicit from now on.

Note, that a trivialization of $E$ induces a trivialization of $P|_{X_\infty}$, hence the Grassmannian $\widehat{\Gr}_g$ is independent of the choice of a Heisenberg.

Let $\Gr_g$ be the moduli space obtained from $\widehat{\Gr}_g$ by forgetting the trivialization of $E$. We have the maps
\[\widehat{\Gr}_g\rightarrow \Gr_g\rightarrow \widehat{\cM}_{g, 1},\]
where $\widehat{\cM}_{g,1}$ is the moduli space of genus $g$ curves with a marked point together with a choice of a local coordinate. We denote by $\widehat{\Gr}_X$ and $\Gr_X$ the fibers of $\widehat{\Gr}_g\rightarrow \widehat{\cM}_{g,1}$ and $\Gr_g\rightarrow \widehat{\cM}_{g,1}$ respectively over a curve $X\in\widehat{\cM}_{g,1}$. The space $\Gr_X$ is known as the \textit{abelianized Grassmannian}.

One can explicitly realize $\widehat{\Gr}_X$ and $\Gr_X$ as the spaces of cosets
\[\widehat{\Gr}_X = LG_-\backslash LG,\quad \Gr_X = LG_-\backslash LG/A_+,\]
where $LG_- = \Map(X_0, G)$ is the ind-group of loops that extend away from the marked point.

\begin{rem}
If two Heisenbergs $A$ and $A'$ are $LG_+$-conjugate, the corresponding Drinfeld--Sokolov phase spaces are canonically isomorphic and the isomorphism intertwines the flows. However, as noted in \cite{BZF1}, there are continuous families of $LG_+$-conjugacy classes of Heisenbergs in the same $LG$-conjugacy class.
\end{rem}

\subsubsection{Examples}

All known examples of Drinfeld--Sokolov hierarchies start with a genus 0 curve, so let $X=\C\bP^1$. In this case we have an open dense subset $\Gr^{big\ cell}_X\subset \Gr_X$ consisting of trivializable $G$-torsors $P$.

We have the following standard choices of Heisenbergs:
\begin{enumerate}
\item $A=LH$ is the homogeneous Heisenberg. For $G=SL_2\C$ one gets the non-linear Schr\"{o}dinger hierarchy.

\item $A$ is the principal Heisenberg. Then the big cell $\Gr^{big\ cell}_X$ parametrizes $G$-opers on the disk and we recover the original description of Drinfeld and Sokolov. For example, the case $G=SL_2\C$ corresponds to the KdV hierarchy and $G=SL_n\C$ to its generalizations known as Gelfand--Dikii or nKdV hierarchies.
\end{enumerate}

The reader is referred to \cite{Kr} for explicit coordinates on Drinfeld--Sokolov phase spaces.

\subsection{Line bundles}

\subsubsection{}
Let $\langle,\rangle$ be an $\mathrm{Ad}$-invariant bilinear form on $\mathfrak{g}$ taking even values on the coroots. It defines a central extension
\[1\rightarrow \C^\times\rightarrow\widehat{LG}\rightarrow LG\rightarrow 1.\]
On the level of Lie algebras, one has $\widehat{L\mathfrak{g}}\cong L\mathfrak{g}\oplus\C$ as vector spaces together with the Lie bracket
\[[(a, \alpha), (b,\beta)] = [a, b] + \Res_{z=0}\langle a, db\rangle.\]

In particular, we see that the central extension is trivial when restricted to $LG_-$, i.e. we have an embedding $LG_-\subset \widehat{LG}$. Therefore, we get a line bundle
\[LG_-\backslash\widehat{LG}\rightarrow LG_-\backslash LG.\]

Since the central extension restricted to $LG_+$ is also split, thus defined line bundle is $LG_+$-equivariant, i.e. it descends to a line bundle $\cL$ on the moduli space of bundles $\Bun_G(X)\cong LG_-\backslash LG/LG_+$. Similarly, it descends to the abelianized Grassmannian $\Gr_X$.

\subsubsection{}

So far we have defined line bundles on $\Gr_X$, and the question arises whether they extend to the whole Grassmannian $\Gr_g$. We have a section $\widehat{\cM}_{g,1}\rightarrow \Gr_g$ given by considering the trivial torsor, which gives a splitting of the pullback map $\Pic \widehat{\cM}_{g,1}\rightarrow \Pic \Gr_g$.

Pick a curve $X\in\widehat{\cM}_{g,1}$. Then we have an exact sequence
\[1\rightarrow \Pic \widehat{\cM}_{g,1}\rightarrow \Pic\Gr_g\rightarrow \Pic\Gr_X,\]
where the last map is the restriction to the fiber over $X$. Laszlo \cite{La} has shown that one can carry out a construction of the line bundle $\cL$ in families thus obtaining a splitting $\Pic\Gr_X\rightarrow \Pic\Gr_g$, which produces a split exact sequence
\[1\rightarrow \Pic \widehat{\cM}_{g,1}\rightarrow \Pic\Gr_g\rightarrow \Pic\Gr_X\rightarrow 1.\]

\begin{rem}
In types A and C the splitting $\Pic\Gr_X\rightarrow \Pic\Gr_g$ can be obtained as the determinant line bundle of a fundamental representation.
\end{rem}

\subsection{Lie algebra actions}

\subsubsection{}
Recall that the Atiyah sequence
\[0\rightarrow \ad E\rightarrow \cA_E\rightarrow T_{D^\times}\rightarrow 0\]
of an $\underline{A}$-torsor $E\rightarrow D^\times$ is the sequence of $W$-invariants of the Atiyah sequence of the corresponding $H$-torsor on the cameral cover $D^\times[A]$. As $H$-torsors on $D^\times[A]$ are trivializable, the Atiyah sequence is non-canonically split, so we have an identification
\[\cA:= H^0(D^\times, \cA_E)\cong H^0(D^\times, T_{D^\times}) \ltimes \mathfrak{a}.\]

There is an action of the Lie algebra $\cA$ on the Grassmannian $\widehat{\Gr}_g$ given by deforming both the curve $X$ and the torsor $E$. Note, that the action of $\cA$ does not descend to the Grassmannian $\Gr_g$ since $\cA$ and $\mathfrak{a}_+$ do not commute.

The subalgebra \[\cA^0:=H^0(D^\times, \ad E)\cong\mathfrak{a}\] of vertical vector fields acts along the fibers of $\widehat{\Gr}_g\rightarrow\widehat{\cM}_{g, 1}$, i.e. it preserves $\widehat{\Gr}_X$. Moreover, the action descends to the abelianized Grassmannian $\Gr_X$.

This action of $\cA^0$ generates the Drinfeld--Sokolov flows, while $\cA$ represents the so-called Orlov--Shulman extended symmetries.

\subsubsection{}
As line bundles $\cL$ on $\widehat{\Gr}_g$ correspond to central extensions of $LG$, the action of $\cA^0\cong\mathfrak{a}$ lifts canonically to the line bundle by restricting the central extension to the Heisenberg. Similarly, we have a central extension $\widehat{\cA}$, which lifts the action of $\cA$ to the line bundle.

\subsection{Tau-function}

\label{sect:tau}

\subsubsection{}
The Lie algebra $\cA$ exponentiates to an ind-group $K=\Aut(D^\times)\ltimes A$. Similarly, there is a central extension $\widehat{K}$ which lifts the action of $K$ to the line bundle $\cL$ on $\widehat{\Gr}_g$.

Consider the action and projection maps
\[\widehat{\Gr}_g\times \widehat{K}\stackrel[p]{a}\rightrightarrows \widehat{\Gr}_g.\]

Then the action of $\widehat{K}$ on $\cL$ can be expressed as the data of an isomorphism $p^*\cL\cong a^*\cL$.

Fix a nonzero section $\sigma\in H^0(\Gr_g, \cL)\cong H^0(\widehat{\Gr}_g, \cL)^{A_+}$. Then one defines a rational function on $\widehat{\Gr}_g\times\widehat{K}$ called the \textit{extended tau-function} $\overline{\tau}$ as
\[\overline{\tau}=\frac{p^*\sigma}{a^*\sigma}.\]

Explicitly, one has
\[\overline{\tau}(P, g) = \frac{\sigma(g^{-1} P)}{g^{-1}\sigma(P)}.\]

Thus, it measures the failure of the section $\sigma$ to be $\widehat{K}$-invariant. Since $\sigma$ is $A_+$-invariant, the extended tau-function is a function on $\widehat{\Gr}_g\times\widehat{K}/A_+$.

Fixing a point $P\in\widehat{\Gr}_g$ we define the \textit{tau-function} of $P$ as the restriction of the extended tau-function $\overline{\tau}$ to the slice $\{P\}\times\widehat{A}/A_+$. It is a rational function on $\widehat{A}/A_+$.

\subsubsection{}
Although $A$ is abelian, its central extension $\widehat{A}$ is not. Its failure is measured by the commutator pairing $c\colon A\times A\rightarrow \C^\times$ given by arbitrarily lifting the elements of $A$ to the central extension and computing the commutator.

Suppose $a\in A_+$. Then
\[\tau_{aP}(g) = \frac{\sigma(g^{-1}aP)}{g^{-1}\sigma(aP)} = \frac{a\sigma(g^{-1}P)}{g^{-1}a\sigma(P)} = c(a, g^{-1})\frac{a\sigma(g^{-1}P)}{ag^{-1}\sigma(P)} = c(a, g^{-1})\tau_P(g).\]

In particular, the tau-function is not well-defined as a function on $\Gr_g$ and is instead a section of an associated line bundle to $\widehat{\Gr}_g\rightarrow \Gr_g$.

\subsubsection{} There is an equivalent, more algebraic, way to write down the tau-function. Fix a point $P\in \widehat{\Gr}_X$ and pull back the line bundle $\cL$ to the orbit $\widehat{A}$ of the Drinfeld--Sokolov flows. $\sigma$ defines a section of $\cL$; in particular, it gives a connection $d\log\sigma$ with regular singularities at the zeros of $\sigma$.

We define a meromorphic 1-form $d\log\tau$ on $\widehat{A}$ by
\[d\log\tau[a] = d\log\sigma[a] - v_a,\]
where $a\in\widehat{\mathfrak{a}}$ and $v_a\in \cA_{\cL}$ is the vector field exhibiting the action of $\widehat{A}$ on $\cL$.

The connection $d\log\sigma$ splits the sequence
\[0\rightarrow \cO\rightarrow \cA_\cL\rightarrow T_{\widehat{A}}\rightarrow 0\]
whenever $\sigma$ does not vanish, which happens generically when $X$ has genus 0. Then $d\log\tau[a]$ is simply the image of $-v_a$ under the splitting $\cA_\cL\rightarrow \cO$. This is the definition of tau-functions which can be found in \cite[Section 3.2]{Wu}.

\section{Algebro-geometric solutions}

\label{sect:ag}

\subsection{Geometric description}

\subsubsection{}

Recall that the Grassmannian $\widehat{\Gr}_X$ is isomorphic to $LG_-\backslash LG$, where the corresponding element of $LG$ is the transition function from the formal disk near infinity to the affine part. Using the trivialization of the torsor $E$ on the formal disk, we can identify $H^0(D^\times, \Ad P)\cong LG$ and $H^0(D^\times, \Ad E)\cong A$.

\begin{lm}
The stabilizer of the $LG$-action on $\widehat{\Gr}_X$ is $H^0(X_0, \Ad P)\subset H^0(D^\times, \Ad P)$.
\end{lm}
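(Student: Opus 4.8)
The plan is to unwind the coset description $\widehat{\Gr}_X = LG_-\backslash LG$ and reinterpret the stabilizer condition in terms of sections of $\Ad P$ over the affine curve $X_0$. Recall that a point of $\widehat{\Gr}_X$ is a $G$-torsor $P$ with a trivialization of $P|_{X_\infty}$, and that the identification with $LG_-\backslash LG$ sends such data to the class of the transition function $g\in LG$ gluing the trivial bundle on $X_\infty$ to (a fixed reference trivialization over) $X_0$. The $LG$-action is by right multiplication on this transition function, modifying the trivialization near infinity. An element $u\in LG = H^0(D^\times,\Ad P)$ stabilizes the point $[g]\in LG_-\backslash LG$ precisely when $g u = \gamma g$ for some $\gamma\in LG_-$, i.e. when $u = g^{-1}\gamma g$ with $\gamma\in H^0(X_0,\Ad P)$.

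First I would make the identification $H^0(D^\times,\Ad P)\cong LG$ precise: the trivialization of $P|_{X_\infty}$ identifies $\Ad P$ restricted to $D^\times$ with the constant group scheme, so sections over $D^\times$ are exactly $LG$; conjugating by the transition function $g$ is the comparison with the trivialization over $X_0$. Second, I would observe that $H^0(X_0,\Ad P)$ sits inside $H^0(D^\times,\Ad P)$ simply by restricting a section defined on all of $X_0$ to the punctured formal neighbourhood $D^\times$, and that under the trivializations this is exactly the subgroup $g^{-1}(LG_-)g\subset LG$ (using that $LG_- = H^0(X_0,G)$ in the reference trivialization, and that $g$ is the transition function). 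Third, I would run the stabilizer computation above: $u\cdot[g] = [gu] = [g]$ in $LG_-\backslash LG$ iff $gu\in LG_-\cdot g$ iff $u\in g^{-1}(LG_-)g = H^0(X_0,\Ad P)$. This chain of equalities is the content of the lemma.

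I expect the only genuine subtlety to be bookkeeping about \emph{where} the various sections live and which trivialization is being used to identify a section of $\Ad P$ with a matrix-valued loop: the group $H^0(X_0,\Ad P)$ is canonically attached to $P$, whereas $LG_-$ is attached to the trivial bundle, so the conjugation by $g$ is exactly the dictionary between them, and one must check it is an \emph{equality} of subgroups of $LG$, not merely an abstract isomorphism. There is also a minor point that by ``stabilizer'' we mean the stabilizer of the point of the stack, i.e. we may need $gu = \gamma g$ only up to an automorphism of $P$; but automorphisms of $P$ are $H^0(X,\Ad P)\subset H^0(X_0,\Ad P)$, so this does not enlarge the answer. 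Once the trivializations are pinned down the argument is a one-line coset manipulation, so I would keep the write-up short and devote the space to stating the identifications cleanly.
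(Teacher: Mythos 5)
Your proposal is correct and follows essentially the same route as the paper: identify the point with a coset $[g]\in LG_-\backslash LG$ via the transition function, observe that $u$ stabilizes $[g]$ iff $u=g^{-1}\gamma g$ for some $\gamma\in LG_-$, and recognize $g^{-1}(LG_-)g$ as the image of $H^0(X_0,\Ad P)$ inside $H^0(D^\times,\Ad P)\cong LG$. Your extra care about which trivialization is used and about stacky automorphisms is sound but not needed beyond what the paper already does.
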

\begin{proof}
Suppose an element $g\in LG$ fixes a coset $[\gamma]\in LG_-\backslash LG$. Explicitly, this means that there is an element $\gamma\in LG_-$, such that $\gamma g = \gamma_-\gamma$ or, equivalently, $g = \gamma^{-1}\gamma_-\gamma$. But $\gamma$ is the transition function from the formal disk to the affine part, so we get that $g\in H^0(X_0, \Ad P)$.
\end{proof}

\begin{cor}
The stabilizer of the $\mathfrak{a}$-action on $\widehat{\Gr}_X$ is $\cA^0_{stab}=\mathfrak{a}\cap H^0(X_0, \ad P)$.
\end{cor}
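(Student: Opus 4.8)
The plan is to deduce the corollary directly from the preceding lemma by differentiating, i.e.\ by passing from the $LG$-action to the induced $\mathfrak{a}$-action via the inclusion $\mathfrak{a}\hookrightarrow H^0(D^\times,\ad P)\cong L\mathfrak{g}$ coming from the fixed trivialization of $E$. The stabilizer of a point under a Lie algebra action is, by definition, the set of Lie algebra elements whose associated vector field vanishes at that point; since the $\mathfrak{a}$-action on $\widehat{\Gr}_X$ is the restriction of the infinitesimal $LG$-action through $\mathfrak{a}\subset L\mathfrak{g}$, an element $a\in\mathfrak{a}$ stabilizes $[\gamma]\in LG_-\backslash LG$ precisely when it stabilizes $[\gamma]$ as an element of $L\mathfrak{g}$.

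First I would recall that the lemma identifies the stabilizer of the group action at $[\gamma]$ with $H^0(X_0,\Ad P)$, where $P$ is the $G$-torsor determined by $[\gamma]$; infinitesimally this gives that the stabilizer of the $L\mathfrak{g}$-action is $H^0(X_0,\ad P)\subset H^0(D^\times,\ad P)\cong L\mathfrak{g}$. Next I would observe that intersecting with the subalgebra $\mathfrak{a}$ yields the stabilizer of the $\mathfrak{a}$-action: an element of $\mathfrak{a}$ acts trivially iff it acts trivially as an element of $L\mathfrak{g}$, iff it lies in $H^0(X_0,\ad P)$, iff it lies in $\mathfrak{a}\cap H^0(X_0,\ad P)$. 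Here one uses that, under the chosen trivialization of $E$, the inclusion $\mathfrak{a}=H^0(D^\times,\ad E)\hookrightarrow H^0(D^\times,\ad P)$ is compatible with the identification $H^0(D^\times,\ad E)\cong\mathfrak{a}$ and with the ambient isomorphism $H^0(D^\times,\ad P)\cong L\mathfrak{g}$, so the abstract intersection $\mathfrak{a}\cap H^0(X_0,\ad P)$ is the literally correct description of $\cA^0_{stab}$.

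There is essentially no obstacle here: the only point requiring a word of care is that differentiating the group-stabilizer statement is legitimate in this ind-scheme setting, i.e.\ that the tangent space to the $LG$-stabilizer at $[\gamma]$ really is $H^0(X_0,\ad P)$ and that this passes correctly to the closed sub-Lie-algebra $\mathfrak{a}$. This follows because the computation in the proof of the lemma, $g=\gamma^{-1}\gamma_-\gamma$, linearizes verbatim: $a\in\mathfrak{a}\subset L\mathfrak{g}$ fixes $[\gamma]$ iff $a=\gamma^{-1}\gamma_-'\gamma+\gamma^{-1}(\text{const along }X_0)$ for some $\gamma_-'\in L\mathfrak{g}_-$, which says exactly that $a$, gauge-transformed to the affine chart, extends over $X_0$. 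Thus the corollary is immediate once the lemma is in hand, and I would present it as a one-line consequence with the compatibility remark made explicit.
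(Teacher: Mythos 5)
Your argument is correct and is essentially the paper's own route: the corollary is stated there without proof as the immediate infinitesimal version of the preceding lemma, obtained by linearizing $g=\gamma^{-1}\gamma_-\gamma$ to identify the stabilizer of the $L\mathfrak{g}$-action with $H^0(X_0,\ad P)$ and then intersecting with the subalgebra $\mathfrak{a}\cong H^0(D^\times,\ad E)$. The only blemish is the spurious additive term ``$\gamma^{-1}(\text{const along }X_0)$'' in your final display --- the linearization is simply $a=\Ad_{\gamma^{-1}}(a_-)$ with $a_-\in L\mathfrak{g}_-=H^0(X_0,\mathfrak{g}\otimes\cO)$ --- but this does not affect the conclusion.
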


\subsubsection{}

Both $\mathfrak{a}$ and $H^0(X_0, \ad P)$ are torsion-free $\cO(X_0)$-modules. Therefore, the stabilizer $\cA^0_{stab}$ is also a torsion-free module; moreover, it is finitely-generated since $H^0(X_0, \ad_P)$ is so. Since $X_0$ is smooth of dimension 1, the stabilizer $\cA^0_{stab}$ is a finitely-generated projective module, so we can localize it to get a vector bundle $c$ on $X_0$:
\[c = \cA^0_{stab}\otimes_{\cO(X_0)} \cO_{X_0}.\]

By construction, $c|_{D^\times}\subset \ad E$ and $c\subset \ad P|_{X_0}$ are subsheaves. In particular, the first inclusion implies that the rank of $c$ is bounded from above by the rank of $\ad E$ which coincides with $\rk G$, the rank of the group $G$. Note that the inclusion $c\subset \ad P|_{X_0}$ is not in general a subbundle.

\begin{prop}
Suppose $X$ has genus 0. Then the inclusion $c\subset \ad P|_{X_0}$ is a subbundle.
\end{prop}
\begin{proof}
We have to show that for every point $x\in X_0$ the map
\[\cA^0_{stab}\otimes_{\cO(X_0)} \cO(X_0) / \cO_x(X_0)\rightarrow H^0(X_0, \ad P)\otimes_{\cO(X_0)} \cO(X_0) / \cO_x(X_0)\]
is injective, where $\cO_x(X_0)$ is the ideal of functions vanishing at the point $x$.

This is equivalent to the following isomorphism:
\[\left(\cO_x(X_0) H^0(X_0, \ad P)\right)\cap \cA^0_{stab}\cong \cO_x(X_0)\cA^0_{stab}.\]

Since $X$ has genus 0, the ideal $\cO_x(X_0)$ is principal, i.e. $\cO_x(X_0)\cong f\cO(X_0)$ for some regular function $f\in \cO(X_0)$ vanishing to the first order at $x$. Suppose $s$ is an element of $\left(\cO_x(X_0) H^0(X_0, \ad P)\right)\cap \cA^0_{stab}$. Then $s\in H^0(D^\times, \ad E)$ and $s=f\tilde{s}$ for some $\tilde{s}\in H^0(X_0, \ad P)$. Therefore, $\tilde{s}\in H^0(D^\times, \ad E)$ since $f$ becomes invertible in $\cO(D^\times)$ and $H^0(D^\times, \ad E)$ is an $\cO(D^\times)$-module.
\end{proof}

\begin{defn}
A point $P\in\widehat{\Gr}_X$ is \textit{algebro-geometric} if the rank of $c$ coincides with $\rk G$ and $c\subset \ad P|_{X_0}$ is a subbundle.
\end{defn}

Clearly, this condition is independent of the trivialization of $E$, so it makes sense to consider algebro-geometric points of $\Gr_X$.

\subsubsection{}

In the algebro-geometric case we have two subbundles $c\subset \ad P|_{X_0}$ and $\ad E\subset \ad P|_D$, which are identified on the intersection. Therefore, we can glue them together to get a bundle $c$ on the whole $X$. $c$ has rank $\rk G$ and is a subbundle of abelian Lie algebras, hence it defines an extension of a Higgs field $\ad E\subset \ad P|_D$ to a regular Higgs field on $X$. The converse is also true:
\begin{prop}
A point $P\in\Gr_X$ is algebro-geometric iff the subbundle $\ad E\subset \ad P|_D$ over $D$ extends to a regular Higgs field on $P\rightarrow X$.
\end{prop}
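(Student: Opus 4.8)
The forward implication is already contained in the paragraph preceding the statement: in the algebro-geometric case $c|_{X_0}$ and $\ad E\subseteq\ad P|_D$ are rank-$\rk G$ subbundles of $\ad P$ that agree over $D^\times$, so they glue to a rank-$\rk G$ subbundle $c\subseteq\ad P$ over all of $X$. The only thing worth spelling out is why such a $c$ is genuinely a regular Higgs field and not merely a family of abelian subalgebras of dimension $\rk\mathfrak g$: the subbundle $c$ is classified by a section of the relative Grassmannian of $\rk G$-dimensional subbundles of $\ad P$ over $X$, and over the formal neighborhood $X_\infty$ this section is $\ad E$, hence factors through the closed subscheme $P\times_G\widetilde{G/N(H)}$; since the generic point of $X_\infty$ lies over the generic point of $X$ and $X$ is irreducible, the entire section factors through $P\times_G\widetilde{G/N(H)}$, i.e. $c$ is a regular Higgs field, and it restricts to $\ad E$ near $\infty$ by construction. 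Hence it suffices to prove the converse.

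For the converse I would argue as follows. Suppose $\ad E\subseteq\ad P|_D$ extends to a regular Higgs field $\sigma$ on $P\to X$, i.e. a $G$-equivariant map $P\to\widetilde{G/N(H)}$, equivalently a section of $P\times_G\widetilde{G/N(H)}\to X$. Pulling back the universal family of regular centralizers along $\sigma$ and descending produces a rank-$\rk G$ subbundle $c'\subseteq\ad P$ over $X$ with abelian fibers and with $c'|_D=\ad E$ (this last equality records that $\sigma$ extends the prescribed Higgs field near $\infty$). I would then restrict to the affine curve $X_0=X\backslash\infty$. Since $X_0$ is affine, the vector bundle $c'|_{X_0}$ is generated by its space of global sections $V:=H^0(X_0,c')$. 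Each $v\in V$ is a section of $\ad P$ over $X_0$ whose restriction to $D^\times$ lies in $c'|_{D^\times}=\ad E|_{D^\times}$, hence in $H^0(D^\times,\ad E)=\mathfrak a$; thus $V\subseteq\mathfrak a\cap H^0(X_0,\ad P)=\cA^0_{stab}$ by the Corollary. Since $V$ generates $c'|_{X_0}$, its elements span the generic fiber of $c'|_{X_0}$, a $\rk G$-dimensional subspace of the generic fiber of $\ad P$; therefore the localization $c=\cA^0_{stab}\otimes_{\cO(X_0)}\cO_{X_0}$, which contains the image of $V$, has rank at least $\rk G$. Together with the \emph{a priori} bound $\rk c\le\rk\ad E=\rk G$ this forces $\rk c=\rk G$, so $P$ is algebro-geometric.

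The points I am glossing over are purely bookkeeping: the flatness and injectivity facts needed to read off $\rk c$ on $X_0$ and on $D^\times$ alike (here $\cA^0_{stab}$ is a torsion-free $\cO(X_0)$-module embedding both into $H^0(X_0,\ad P)$ and, by restriction to $D^\times$, into the free $\C((z))$-module $\mathfrak a$, so passing to the function field of $X_0$ computes its rank on either side), together with the gluing of a subbundle prescribed compatibly on $X_0$ and on $X_\infty$. I do not expect a serious obstacle: the forward direction is formal once the irreducibility remark above is made, and the entire force of the converse rests on the single observation that $X_0$ is affine, hence $c'|_{X_0}$ is globally generated --- without this there is no reason for the global sections of the extended Higgs field over $X_0$ to span enough of $\ad E$ near the puncture to make $c$ of full rank.
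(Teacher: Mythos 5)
Your proposal is correct and follows essentially the same route as the paper: the paper's proof also treats only the converse (the forward direction being the preceding paragraph), and argues exactly that $H^0(X_0,c')\subset\mathfrak a\cap H^0(X_0,\ad P)\cong\cA^0_{stab}$, hence $c'|_{X_0}\subset c$ and $\rk c\ge\rk G$. You merely make explicit the global-generation step on the affine curve and the gluing/irreducibility bookkeeping that the paper leaves implicit.
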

\begin{proof}
Suppose that $c'$ is the extension of $\ad E$ to $X$. Then \[H^0(X_0, c')\subset \mathfrak{a}\cap H^0(X_0, \ad P)\cong \cA^0_{stab}.\] Therefore, $c'|_{X_0}\subset c$. Thus, the rank of $c$ is bounded from below by $\rk G$ and hence coincides with $\rk G$, i.e. the point is algebro-geometric.
\end{proof}

We see that algebro-geometric points in $\Gr_X$ can be reconstructed from the following data:
\begin{itemize}
\item A $G$-torsor $P\rightarrow X$.

\item A regular Higgs field $c\subset \ad P$.

\item An isomorphism of cameral covers $D[c|_D]\cong D[A]$.
\end{itemize}

\subsubsection{}

Consider two points $P_1,P_2\in\Gr_X$ which differ by an action of $A$. The corresponding Higgs fields $c_{P_1}$ and $c_{P_2}$ are isomorphic away from infinity, so we have an isomorphism of cameral covers $X_0[c_{P_1}]\cong X_0[c_{P_2}]$. Since both $X[c_{P_1}]$ and $X[c_{P_2}]$ are smooth projective curves, this gives an isomorphism $X[c_{P_1}]\cong X[c_{P_2}]$. Using the results of \cite{DG} we see that an $A$-orbit of an algebro-geometric point in $\Gr_X$ is a torsor over the corresponding Prym variety. In particular, it is finite-dimensional.

\subsection{Tau-function}

\subsubsection{}

Consider the diagram

\[\xymatrix{
& & 0 & 0 & \\
0 \ar[r] & \cA^0_{stab} \ar@{-->}[rd] \ar[r] & \cA^0\ar[u]\ar[r] & T_{\Gr_X}\ar[u] &\\
&& \widehat{\cA}^0\ar[u]\ar[r] & \cA_\cL\ar[u] & \\
&& \cO_{\Gr_X}\ar[u]\ar@2{-}[r] & \cO_{\Gr_X}\ar[u] \\
&& 0\ar[u] & 0\ar[u] &
}\]
where the middle column is a pullback of the rightmost column, the Atiyah sequence of $\cL$, along $\cA^0\rightarrow T_{\Gr_X}$. By the universal property there is a unique lift $\cA^0_{stab}\rightarrow\widehat{\cA}^0$, such that the composite $\cA^0_{stab}\rightarrow\widehat{\cA}^0\rightarrow \cA_\cL$ is zero.

Let $v\in \cA^0_{stab}(P)$ be any vector stabilizing a point $P\in\Gr_X$. Since $\cA^0_{stab}\rightarrow \cA_\cL$ is the zero map, $v$ preserves the fiber of $\cL$ at $P$. Therefore, the tau-function
\[\tau_P(g)=\frac{\sigma(g^{-1}P)}{g^{-1}\sigma(P)}\]
obeys a first-order differential equation
\[v\tau_P=0\]
for any $v\in \cA^0_{stab}(P)$. This simply means that the tau-function descends to a well-defined function on the orbit $\widehat{A}/A_{stab}(P)$, where $A_{stab}(P)$ is the stabilizer of the point $P$ in $\widehat{\Gr}_X$ under the action of the Heisenberg $A$. It is linear along the fibers of $\widehat{A}/A_{stab}(P)\rightarrow A/A_{stab}(P)$ and so defines a section of the line bundle dual to $\widehat{A}/A_{stab}(P)\rightarrow A/A_{stab}(P)$.

\section{String solutions}

\label{sect:string}

\subsection{Geometric description}

\subsubsection{}

Algebro-geometric points were characterized by the property that enough elements of the Heisenberg $\mathfrak{a}$ stabilize a point. Similarly, one can consider the action of the Witt-Heisenberg algebra $\cA$, which can be thought of as the algebra of first-order differential operators with coefficients in $\mathfrak{a}$.

In the algebro-geometric case we looked at \[\cA^0_{stab}=\cA^0\cap H^0(X_0, \ad P),\] so now consider \[\cA_{stab}=\cA\cap H^0(X_0, \cA_P).\] We have the following diagram:
\[
\xymatrix{
0\ar[r] & H^0(X_0, \ad P)\ar[r] & H^0(X_0, \cA_P)\ar[r] & H^0(X_0, T_X)\ar[r] & 0\\
0\ar[r] & \cA^0_{stab}\ar[r] \ar@{^{(}->}[u] & \cA_{stab}\ar[r]\ar@{^{(}->}[u] & H^0(X_0, T_X) \ar@{=}[u] &
}
\]

Note, that there is no reason to expect in general that the map $\cA_{stab}\rightarrow H^0(X_0, T_X)$ is surjective.

\begin{defn}
A point $P\in\widehat{\Gr}_X$ is \textit{string} if the map $\cA_{stab}\rightarrow H^0(X_0, T_X)$ is surjective.
\end{defn}

If $X$ has genus 0, $A$ is the principal Heisenberg and $G=SL_n\C$, Schwarz \cite{Sch} showed that this definition is equivalent to the so-called string equation, hence the name. We will return to the issue of the string equation in \autoref{sect:stringdiff}.

Note, that in the string case the sequence of Lie algebras \[0\rightarrow \cA^0_{stab}\rightarrow \cA_{stab}\rightarrow H^0(X_0, T_X)\rightarrow 0\] is non-canonically split since $T_X$ is locally-free.

\subsubsection{}

Let us pick a splitting $H^0(X_0, T_X)\rightarrow \cA_{stab}$ and consider the composite
\[H^0(X_0, T_X)\rightarrow \cA_{stab}\hookrightarrow H^0(X_0, \cA_P).\]

By definition, this means that $P$ carries a connection on the affine part $X_0$. Moreover, the connection on $D^\times$ preserves the Heisenberg in the sense that for any vectors $v\in T_{D^\times}$ and $a\in\mathfrak{a}$ we have $[\nabla_v, a]\in\mathfrak{a}$. We get the following statement:
\begin{thm}
A point $P\in\widehat{\Gr}_X$ is string iff the torsor $P$ has a connection $\nabla$ on the affine part $X_0$, such that $\nabla_{z\partial/\partial z}$ induces the canonical gradation on the Heisenberg $\mathfrak{a}$.
\label{prop:virconnection}
\end{thm}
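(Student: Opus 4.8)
The plan is to introduce the intermediate condition $(\ast)$: \emph{$P$ admits a connection $\nabla$ on $X_0$ whose restriction to $D^\times$ takes values in the Atiyah algebra $\cA_E\subset\cA_P|_{D^\times}$} -- i.e. one that near $\infty$ restricts to a connection on the $\underline A_+$-reduction $E$ -- and to prove ``string''$\;\Leftrightarrow\;(\ast)\;\Leftrightarrow\;$(the condition in the statement). Note first that a connection on $P|_{X_0}$ always exists, since $X_0$ is a smooth affine curve and the obstruction to splitting the Atiyah sequence lies in $H^1(X_0,\ad P\otimes\Omega^1_{X_0})=0$; so all the content is about the behaviour near the marked point.

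For ``string''$\Rightarrow(\ast)$: the space $\cA=H^0(D^\times,\cA_E)$ is a module over $\cO(X_0)$ via the restriction $\cO(X_0)\hookrightarrow\cO(D^\times)$, so $\cA_{stab}=\cA\cap H^0(X_0,\cA_P)$ is an $\cO(X_0)$-submodule of $H^0(X_0,\cA_P)$ and $0\to\cA^0_{stab}\to\cA_{stab}\to H^0(X_0,T_X)\to 0$ is an exact sequence of $\cO(X_0)$-modules, surjective on the right by the string hypothesis. Since $T_X|_{X_0}$ is a line bundle on an affine curve, $H^0(X_0,T_X)$ is $\cO(X_0)$-projective and the sequence splits $\cO(X_0)$-linearly (as already noted before the statement); an $\cO(X_0)$-linear splitting $H^0(X_0,T_X)\to\cA_{stab}\hookrightarrow H^0(X_0,\cA_P)$ defines a connection $\nabla$ on $P|_{X_0}$, and since $H^0(X_0,T_X)$ generates $T_X$ over $X_0$ while the splitting has image in $\cA_{stab}\subset\cA$, the restriction of $\nabla$ to $D^\times$ is $\cA_E$-valued. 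Conversely, for $(\ast)\Rightarrow$``string'', given such a $\nabla$ and any $v\in H^0(X_0,T_X)$ we get $\nabla_v\in H^0(X_0,\cA_P)$ with $\nabla_v|_{D^\times}\in H^0(D^\times,\cA_E)=\cA$, so $\nabla_v\in\cA_{stab}$ maps to $v$, showing $\cA_{stab}\to H^0(X_0,T_X)$ is onto.

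The identification of $(\ast)$ with the condition in the statement is the one step that carries genuine content; the rest is bookkeeping about the three loci $X\supset X_0\supset D^\times$ and the evident restriction maps. If $\nabla$ is $\cA_E$-valued near $\infty$, then $\nabla_{z\partial/\partial z}|_{D^\times}$ is a lift of $L_0=z\partial/\partial z$ to $\cA_E$, so by the very definition of the canonical connection $\nabla^{ad}$ on $\ad E$ it acts on $\mathfrak a=H^0(D^\times,\ad E)$ as $\nabla^{ad}_{L_0}$, i.e. it induces the canonical gradation. For the converse I would fix any lift $\widetilde{L_0}\in\cA_E$ of $L_0$; then $\xi:=\nabla_{z\partial/\partial z}|_{D^\times}-\widetilde{L_0}$ is vertical, hence lies in $H^0(D^\times,\ad P)\cong L\mathfrak{g}$, and it commutes with $\mathfrak a$ because both $\nabla_{z\partial/\partial z}$ and $\widetilde{L_0}$ act on $\mathfrak a$ by $\nabla^{ad}_{L_0}$. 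The key input, which I expect to be the main obstacle, is the fact that a Heisenberg $\mathfrak a$, being a maximal torus of $LG$, is self-centralizing in $L\mathfrak{g}$; granting this, $\xi\in\mathfrak a=H^0(D^\times,\ad E)\subset\cA_E$, so $\nabla_{z\partial/\partial z}|_{D^\times}\in\cA_E$, and since $T_{D^\times}$ is freely generated over $\cO(D^\times)$ by $L_0$ while $\cA_E$ is an $\cO(D^\times)$-submodule of $\cA_P|_{D^\times}$, the whole of $\nabla|_{D^\times}$ is $\cA_E$-valued, which is $(\ast)$.
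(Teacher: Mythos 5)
Your proposal is correct and follows essentially the same route as the paper: your intermediate condition $(\ast)$ is precisely what the paper's proof establishes (that $\nabla|_{D^\times}$ lands in $H^0(D^\times,\cA_E)$), your element $\xi=\nabla_{z\partial/\partial z}-\widetilde{L_0}$ is the paper's $g(v)=\nabla_v-s_v$, and the key input you flag --- that an element of $H^0(D^\times,\ad P)$ centralizing $\mathfrak{a}$ must lie in $\mathfrak{a}$ --- is exactly the step the paper invokes. Your only addition is to make explicit the $\cO(X_0)$-linearity of the splitting $H^0(X_0,T_X)\to\cA_{stab}$ needed to get an honest connection, which the paper leaves implicit in the remark preceding the theorem.
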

\begin{proof}
Suppose that we have a connection $\nabla$, such that $\nabla_{z\partial/\partial z}$ induces the canonical gradation. Since the module of vector fields is free over $\cO(D^\times)$, we see that the whole action of the Witt algebra on the Heisenberg $\mathfrak{a}$ coincides with the canonical action. We want to prove that it implies that the image of $\nabla$ lands in $H^0(D^\times, \cA_E)\subset H^0(D^\times,  \cA_P)$.

Pick a splitting $s\colon H^0(D^\times, T_X)\rightarrow H^0(D^\times, \cA_E)$. Since $\nabla$ gives the canonical action of the Witt algebra on the Heisenberg, the difference $\nabla_v - s_v$ is an element $g(v)$ of $H^0(D^\times, \ad P)$ for any $v\in H^0(D^\times, T_X)$. Moreover, as both $\nabla$ and $s$ preserve the Heisenberg, $g(v)$ commutes with $\mathfrak{a}$. But then $g(v)\in\mathfrak{a}$ and hence $\nabla$ lands in $H^0(D^\times, \cA_E)$.

We see that there is a map $H^0(X_0, T_X)\rightarrow H^0(D^\times, \cA_E) \cap H^0(X_0, \cA_P)$ and hence the point is string.
\end{proof}

We get the following geometric structure on string solutions:
\begin{itemize}
\item A $G$-torsor $P\rightarrow X$ together with an $\underline{A}_+$-reduction $E\rightarrow D$.

\item A subbundle $c\subset \ad P|_{X_0}$ of abelian Lie algebras, such that $c|_{D^\times}\subset \ad E|_{D^\times}$.

\item A connection $\nabla$ on the affine part $X_0$ which preserves $c$ and gives the canonical gradation on the Heisenberg $H^0(D^\times, \ad E)$.
\end{itemize}

As in the algebro-geometric case, $c$ is the localization of $\cA^0_{stab}$ to a vector bundle on $X_0$. However, since $c|_{D^\times}\subset \ad E|_{D^\times}$ is not an isomorphism, we do not have a canonical way of extending $c$ to the whole curve $X$.

\subsubsection{}

If the genus of $X$ is zero, we can try to exploit the action of global vector fields $\mathfrak{sl}_2(\C)\cong H^0(X, T_X)$ on $c$.

\begin{thm}
Suppose $X$ has genus 0 and the Heisenberg $A$ has elliptic monodromy. Then string points have $c=0$.
\label{prop:virag}
\end{thm}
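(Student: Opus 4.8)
The plan is to argue by contradiction: assume $P$ is a string point on a genus $0$ curve $X$ with principal Heisenberg $A$, and suppose the stabilizer Higgs bundle $c$ is nonzero. By \autoref{prop:virconnection} the torsor $P$ carries a connection $\nabla$ on $X_0 = X\setminus\infty$ whose component $\nabla_{z\partial/\partial z}$ induces the \emph{canonical} (principal) gradation on $\mathfrak{a} = H^0(D^\times,\ad E)$, and $\nabla$ preserves the subbundle $c$. Since $c$ is a nonzero subbundle of abelian Lie algebras inside $\ad P|_{X_0}$, and it is the localization of $\cA^0_{stab} = \mathfrak{a}\cap H^0(X_0,\ad P)$, the space of global sections $H^0(X_0, c)$ is a nonzero $\nabla$-stable subspace of $\mathfrak{a}$. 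The idea is that $\nabla$-stability forces this subspace to be graded for the canonical (principal) gradation, and then the specific structure of the principal Heisenberg — in particular, by \autoref{prop:heisenbergcoxeter} and Example (2) after it, the fact that the monodromy is a Coxeter element so that the \emph{degree $0$ part of $\mathfrak{a}$ is trivial} and all nonzero homogeneous pieces are genuinely nontrivial — will be incompatible with $H^0(X_0,c)$ being finitely generated over $\cO(X_0)$ in the way required.

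The key steps, in order, are as follows. First, exploit that on $X = \C\bP^1$ one has $H^0(X, T_X) \cong \mathfrak{sl}_2(\C)$ while $H^0(X_0, T_X)$ is the subalgebra spanned by $\partial/\partial z$, $z\partial/\partial z$, $z^2\partial/\partial z$ in the coordinate $z$ at infinity (up to the usual sign/coordinate conventions near the marked point). Since $P$ is string, all three of these vector fields lift to elements of $\cA_{stab}\subset H^0(X_0,\cA_P)$, i.e. $\nabla$ is defined for all of them and preserves $c$. Second, observe that the three operators $\nabla_{\partial/\partial z}$, $\nabla_{z\partial/\partial z}$, $\nabla_{z^2\partial/\partial z}$ act on the finite-dimensional-over-$\cO(X_0)$ module $H^0(X_0,c)$, and together with the $\cO(X_0)$-module structure they generate an $\mathfrak{sl}_2$-action compatible with the grading operator $\nabla_{z\partial/\partial z}$; in particular $\nabla_{z\partial/\partial z}$ acts semisimply with a bounded spectrum on $H^0(X_0,c)$ viewed appropriately, which is impossible unless the relevant piece is zero, because on the principal Heisenberg the grading operator $\nabla^{ad}_{L_0}$ has \emph{unbounded} spectrum (multiplication by $z$ raises degree by $1$, and every coset of $\frac{1}{h_{Cox}}\mathbf{Z}$ that occurs occurs infinitely often) while any nonzero $\cO(X_0)$-coherent $\nabla_{z\partial/\partial z}$-stable submodule that is also stable under the lowering operator $\nabla_{\partial/\partial z}$ would have to contain a lowest-degree vector killed by lowering — and then the $\mathfrak{sl}_2$-representation it generates is finite-dimensional, forcing, via the principal $\mathfrak{sl}_2$ inside $\mathfrak{g}$ and the exponents, a nonzero degree-$0$ element of $\mathfrak{a}$, contradicting \autoref{prop:heisenbergcoxeter} applied to the principal Heisenberg.

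Concretely I would make the last point sharp by translating to the explicit model: under the principal gradation $\mathfrak{a}$ is spanned by homogeneous elements $p_j$ whose degrees are the exponents $m_i$ of $\mathfrak{g}$ modulo $h_{Cox}$ (so degrees lie in $\frac{1}{h_{Cox}}(\mathbf{Z} + \{m_1,\dots,m_n\})$), with $p_{-1} = \sum f_i$ in degree $-1/h_{Cox}$ and multiplication by $z$ shifting degree by $+1$; and $\nabla^{ad}_{L_0}$ is exactly the principal grading operator computed in Example (2). A $\nabla$-stable nonzero $c$ gives a nonzero graded subspace $V = H^0(X_0,c)\subset\mathfrak{a}$ closed under multiplication by $\cO(X_0) = \C[z]$ (so closed under $\cdot z$, hence unbounded above in degree) and, crucially, stable under $\nabla_{\partial/\partial z}$ which \emph{lowers} degree by $1$; but $V$ cannot be stable under lowering by $1$ indefinitely while remaining inside the single fixed Heisenberg $\mathfrak{a}$ unless it "wraps around" through degree $0$, and the principal Heisenberg has no degree-$0$ vectors — so one runs into $0$ after finitely many steps, yet $\nabla_{\partial/\partial z}$ applied to a minimal-degree nonzero element must land in $V$, giving either a contradiction or forcing $V$ to contain a nonzero element in every sufficiently negative degree, including degree $0$. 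Either way $\mathfrak{a}$ acquires a nonzero degree-$0$ element, contradicting \autoref{prop:heisenbergcoxeter}, so $V = 0$ and hence $c = 0$.

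\textbf{Main obstacle.} The delicate point is making rigorous the interaction between the $\cO(X_0)$-module structure (multiplication by $z$, which moves one through $\mathfrak{a}$) and the connection operators $\nabla_{\partial/\partial z}, \nabla_{z\partial/\partial z}, \nabla_{z^2\partial/\partial z}$ near infinity: one must check that $\nabla$ restricted to $D^\times$ really does act on $\mathfrak{a}$ by the \emph{canonical} Witt-algebra action (this is where \autoref{prop:virconnection} is used in full, not just for $z\partial/\partial z$), and then that the global-over-$X_0$ elements $z^2\partial/\partial z$ do not introduce poles that break gradedness. I expect the cleanest route is to phrase everything in terms of the $\mathfrak{sl}_2$-triple acting on the finite-type object $c$ and to invoke finite-dimensionality of $\mathfrak{sl}_2$-subrepresentations, rather than chasing degrees by hand; the genuine content is that the principal Heisenberg, having Coxeter monodromy, admits no nonzero finite-dimensional $\mathfrak{sl}_2$-stable graded subspace other than $0$ — which is exactly \autoref{prop:heisenbergcoxeter} read correctly.
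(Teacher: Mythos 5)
Your overall strategy --- use the connection supplied by \autoref{prop:virconnection}, iterate vector fields from $H^0(X_0,T_X)$ on $\cA^0_{stab}$, and derive a contradiction with \autoref{prop:heisenbergcoxeter} --- is the right skeleton, and it is indeed how the paper argues. But the mechanism you propose for producing the contradiction has a genuine gap, rooted in a direction-of-degrees confusion. Since $z$ is the local coordinate \emph{at} $\infty$, one has $\cO(X_0)=\C[z^{-1}]$, not $\C[z]$: the degrees occurring in $H^0(X_0,c)\cong\cA^0_{stab}$ are unbounded \emph{below} (arbitrarily high pole order at $\infty$), while each individual element has only finitely many negative-degree terms. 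Consequently there is no ``lowest-degree vector killed by the lowering operator $\nabla_{\partial/\partial z}$'': iterating the lowering operator never terminates, and the finite-dimensional $\mathfrak{sl}_2$-representation theory you want to invoke has nothing to bite on, because $\cA^0_{stab}$ is infinite-dimensional over $\C$ and its finite-dimensional part is not stable under lowering. (Also, $H^0(X_0,T_X)$ is not the three-dimensional span of $\partial_z, z\partial_z, z^2\partial_z$ --- that is $H^0(X,T_X)$; the former is the infinite-dimensional $\Vir_-$.)

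The missing ingredient is a termination mechanism, and the paper supplies it using only the single \emph{raising} operator $v=z^2\partial/\partial z$ together with a global finiteness statement. Namely: $\nabla_v$ raises the degree at $\infty$ by one, so for any nonzero $e\in\cA^0_{stab}$ some iterate $(\nabla_v)^n e$ lies in $H^0(D,\ad E)\cap H^0(X_0,\ad P)\subset H^0(X,\ad P)$, which is \emph{finite-dimensional} because $X$ is projective; since $\nabla_v$ keeps increasing the order of vanishing at $\infty$ inside this finite-dimensional space, a further iterate must vanish. Taking $n$ minimal, $s=(\nabla_v)^{n-1}e$ is a nonzero element of $\mathfrak{a}\cong\Map_W(D^\times[A],\mathfrak{h})$ with $\nabla_v s=0$, i.e.\ annihilated by a nonvanishing vector field on $D^\times$, hence locally constant on the cameral cover and monodromy-fixed --- impossible for Coxeter monodromy by (the proof of) \autoref{prop:heisenbergcoxeter}. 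So you should (i) drop $\nabla_{\partial/\partial z}$ and the $\mathfrak{sl}_2$-triple entirely, (ii) fix the identification $\cO(X_0)=\C[z^{-1}]$, and (iii) make the finite-dimensionality of $H^0(X,\ad P)$ the pivot of the argument. Your insistence that \autoref{prop:virconnection} be used in full --- to guarantee that $\nabla$ restricted to $D^\times$ acts on $\mathfrak{a}$ by the canonical Witt-algebra action, not just that $\nabla_{z\partial/\partial z}$ gives the gradation --- is correct and is indeed needed for the final step.
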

\begin{proof}
Let $v\in H^0(X, T_X)$ be a regular vector field with a zero of order 2 at $\infty$. In local coordinates $v=z^2\frac{\partial}{\partial z}$. The corresponding derivation of $H^0(D^\times, \cO_X)$ has order 1. Therefore, $\nabla_v$ also has order 1 on the Heisenberg $\mathfrak{a}$.

Take a nonzero element $e\in\cA^0_{stab}$. $\nabla_v$ raises its order, so for some $n$ we have $(\nabla_v)^ne\in H^0(D, \ad E)$. On the other hand, \[H^0(D, \ad E)\cap H^0(X_0, \ad P) \subset H^0(X, \ad P)\] is finite-dimensional, so, possibly increasing $n$, we can assume that $(\nabla_v)^ne=0$. Let $n$ be the minimal such exponent, so that $s=(\nabla_v)^{n-1}e\neq 0$. Then $s$ is annihilated by $\nabla$, i.e. it is a flat section. But it cannot happen by assumption on the Heisenberg and \autoref{prop:heisenbergcoxeter}. Therefore, $\cA^0_{stab}=0$.
\end{proof}

In particular, since $c=0$, we see that string points cannot be algebro-geometric.

A similar statement was obtained previously by F. Plaza Mart\'{i}n \cite[Theorem 3.1]{PM} when $G=SL_n\C$ and $A$ is the principal Heisenberg.

\subsection{Virasoro constraints}

Any vector $a\in \cA_{stab}$ annihilates the extended tau-function
\[a\overline{\tau}=0\]
since it preserves the section $\sigma$. However, unless $a\in \cA^0_{stab}$, such a constraint does not make sense on the ordinary tau-function since it has components in the direction of the Witt algebra $H^0(D^\times, T_{D^\times})$. To get rid of these components, we would like to know how the tau-function changes under the action of the Witt algebra.

The Witt algebra $H^0(D^\times, T_{D^\times})$ has a central extension $\Vir$ called the Virasoro algebra. It is a Lie algebra with generators $C$ and $L_n$ for an integer $n$ with the commutation relations
\[[L_n, L_m] = (n-m)L_{n+m} + \delta_{n, -m}\frac{C}{12}(n^3-n), \quad [L_n, C] = 0.\]

The Heisenberg $\mathfrak{a}\cong H^0(D^\times, \ad E)$ has a central extension $\widehat{\mathfrak{a}}$ given by restricting the central extension of the loop algebra to the Heisenberg.

Combining these two central extensions, we get a central extension of $H^0(D^\times, \cA_E)$ with a two-dimensional center which is isomorphic to the semi-direct product $\Vir\ltimes\widehat{\mathfrak{a}}$.

\subsubsection{Sugawara construction for twisted Heisenbergs}

In this section we define an embedding of the Virasoro algebra into the universal enveloping algebra of a Heisenberg called the Sugawara construction, e.g. see \cite{BZF2}.

Recall from \autoref{sect:heisenberg} that a Heisenberg $\mathfrak{a}$ with monodromy $[s]$ as an abstract Lie algebra can be obtained as the subspace of $s$-invariants in $\mathfrak{h}(\!(t)\!)$, where $s\in W$ acts on $\mathfrak{h}(\!(t)\!)$ by
\[s.(at^m) = (s.a)e^{2\pi im/h}t^m,\quad a\in\mathfrak{h}.\]

Similarly, one can obtain the central extension $\widehat{\mathfrak{a}}$ as the subspace of $s$-invariants in $\widehat{\mathfrak{h}(\!(t)\!)}$, where $s$ acts trivially on the central element.

Consider a decomposition
\[\mathfrak{h}=\bigoplus_{m=0}^{h-1}\mathfrak{h}_m\]
into eigenspaces of $s$, where $s$ acts on $\mathfrak{h}_m$ by $\exp(2\pi im/h)$. We denote the dimension $d_\alpha=\dim\mathfrak{h}_{\alpha\bmod h}$.

Since $s$ is orthogonal with respect to the inner product $\langle,\rangle$ on $\mathfrak{h}$, the spaces $\mathfrak{h}_m$ and $\mathfrak{h}_{h-m}$ are naturally paired. Let $\{a^i_m\}_i$ be a basis of $t^m\mathfrak{h}_m$ and let $\{a^{\bar{i}}_{-m}\}_i$ be the dual basis of $t^{-m}\mathfrak{h}_{h-m}$.

Let us define the universal enveloping algebra $U_k(\widehat{\mathfrak{a}})$ as the quotient $U(\widehat{\mathfrak{a}})/(\mathbf{K}-k\cdot 1)$, where $\mathbf{K}$ is the central element of $\widehat{\mathfrak{a}}$. We can similarly define the universal enveloping algebras $U_c(\Vir)$ and $U_{k,c}(\Vir\ltimes\widehat{\mathfrak{a}})$.

The Sugawara currents we are about to define will involve infinite expressions in the elements $a^i_\alpha$, so they belong to a completion of the universal enveloping algebra $U_k(\widehat{\mathfrak{a}})$ in the ``positive'' direction:
\[\widehat{U}_k(\widehat{\mathfrak{a}}) = \varprojlim_n U_k(\widehat{\mathfrak{a}}) / (\widehat{\mathfrak{a}}_n U_k(\widehat{\mathfrak{a}})),\]
where we denote $\widehat{\mathfrak{a}}_n = t^{hn}\widehat{\mathfrak{a}}_+$. Similarly, we can define a completion $\widehat{U}_{k,c}(\Vir\ltimes\widehat{\mathfrak{a}})$.

\begin{prop}
The elements of $\widehat{U}_k(\widehat{\mathfrak{a}})$
\[L^S_n=\frac{1}{kh}\sum_{\alpha<nh/2, i}a^i_{hn-\alpha} a^{\bar{i}}_\alpha + \frac{1}{2kh}\sum_i a^i_{hn/2} a^{\bar{i}}_{hn/2} - \delta_{n, 0}\frac{1}{4h^2}\sum_{0<l<h} d_ll(h-l)\]
obey the Virasoro commutation relations
\[[L^S_n, L^S_m] = (n - m)L^S_{n+m} - \delta_{n, -m}\frac{\dim\mathfrak{h}}{12}(n^3-n).\]
\label{prop:sugawara}
\end{prop}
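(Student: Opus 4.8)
The plan is to verify the Virasoro relations by a direct computation with the twisted oscillators, organizing the calculation so that the standard untwisted Sugawara argument goes through with bookkeeping for the roots of unity. First I would record the commutation relations in the twisted Heisenberg: with $\{a^i_m\}$ a basis of $t^m\mathfrak{h}_m$ and $\{a^{\bar i}_{-m}\}$ the dual basis of $t^{-m}\mathfrak{h}_{h-m}$, one has $[a^i_\alpha, a^{\bar j}_{-\alpha}] = \delta^{ij}\alpha\,\mathbf{K}$ (and all other brackets zero modulo the central term), since the residue pairing on $\mathfrak{h}((t))$ restricts correctly to the $s$-invariants. It is convenient to introduce the normally-ordered bilinear expression $:\!a^i_\beta a^{\bar i}_\gamma\!:$ (summed over $i$ in the appropriate graded piece), placing the annihilation operator — the one with more positive index — on the right; then the displayed formula for $L^S_n$ is exactly $\frac{1}{2kh}\sum_i :\!a^i_{hn-\alpha} a^{\bar i}_\alpha\!:$ summed over all $\alpha$ in the lattice shifted appropriately, with the constant $-\delta_{n,0}\frac{1}{4h^2}\sum_{0<l<h}d_l l(h-l)$ being precisely the normal-ordering correction coming from reordering the ``middle'' terms near $\alpha \approx hn/2$. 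I would first check this identification of the two forms of $L^S_n$, as it reduces the whole proposition to the clean normally-ordered setting.

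Next I would compute $[L^S_n, a^{\bar i}_\gamma]$ (and $[L^S_n, a^i_\gamma]$). The expected answer is the usual one, $[L^S_n, a_\gamma] = -\gamma\, a_{\gamma + hn}$ up to normalization, which exhibits $L^S_n$ as implementing the vector field $-t^{hn+1}\partial_t$ (rescaled) on the Heisenberg; this is the step where the central charge $k$ in the denominator $kh$ gets cancelled by the $k$ from $[a,a^{\bar{}}\,] \propto k$. With this ``adjoint action'' formula in hand, the bracket $[L^S_n, L^S_m]$ is computed by letting $L^S_n$ act term-by-term on the quadratic expression defining $L^S_m$. The terms linear in the oscillators reassemble into $(n-m)L^S_{n+m}$ exactly as in the untwisted case — one has to be slightly careful that the normal-ordering regions match up, which is where the $\frac{1}{2kh}\sum_i a^i_{hn/2}a^{\bar i}_{hn/2}$ ``diagonal'' correction term earns its keep. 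The remaining scalar (the double contraction) is a sum $\sum_\alpha \alpha(hn - \alpha)$ over the relevant range, weighted by the dimensions $d_\alpha$; evaluating this finite sum and combining it with the contribution of the constants $-\delta_{\cdot,0}\frac{1}{4h^2}\sum d_l l(h-l)$ must produce $-\delta_{n,-m}\frac{\dim\mathfrak{h}}{12}(n^3 - n)$.

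The main obstacle I anticipate is precisely that last anomaly computation: keeping track of the twisting means the quadratic sum runs over a coset of $h\mathbf{Z}$ rather than over $\mathbf{Z}$ itself, and the dimension weights $d_\alpha = \dim\mathfrak{h}_{\alpha \bmod h}$ vary with $\alpha$. The key simplification is that $\sum_{\text{all } \alpha \bmod h} d_\alpha = \dim\mathfrak{h}$, so after summing over the $h$ residue classes the leading cubic-in-$n$ behaviour must become $h$-independent and collapse to the standard $\frac{\dim\mathfrak{h}}{12}(n^3-n)$; the subleading pieces are exactly the terms that the constant $\frac{1}{4h^2}\sum_{0<l<h}d_l l(h-l)$ is designed to absorb. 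Concretely I would compute $\sum_{\alpha} d_\alpha\,\alpha(hn-\alpha)$ by splitting $\alpha = h\beta + l$ with $0\le l < h$, doing the inner sum over $\beta$ (a standard $\sum\beta^2$), and then the finite sum over $l$; checking that the $l$-dependent remainder is cancelled by the difference of the constant terms at levels $n$ and $0$ is the one genuinely tedious verification, but it is a finite identity in $n$, $h$, and the $d_l$. Once that identity checks out, the proposition follows. As a sanity check one can specialize to the principal Heisenberg, where $h = h_{\mathrm{Cox}}$, all $d_l \in \{0,1\}$ are determined by the exponents, and the formula should reduce to the classical Lepowsky–Wilson / Kac twisted-Sugawara expression.
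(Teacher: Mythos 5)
Your overall strategy --- direct computation of the commutator, isolating the scalar anomaly as a finite sum $\sum_\alpha d_\alpha\,\alpha(hn-\alpha)$ over a range of length $\sim hm/2$, and evaluating it by substituting $\alpha = hb+l$, summing over $b$, and using $\sum_l d_l = \dim\mathfrak{h}$ --- is exactly what the paper does, and the ``tedious finite identity'' you anticipate is precisely the computation carried out there (the $l$-dependent remainder is matched against $(n-m)$ times the constant in $L^S_{n+m}$ using $d_l = d_{h-l}$). Computing $[L^S_n, a^i_\gamma]$ first and then acting term by term on $L^S_m$ is a harmless reorganization; the paper proves that commutator as a separate proposition immediately afterwards rather than using it as a lemma.

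There is, however, one concrete error that would derail the computation if carried through literally: your identification of $L^S_n$ with a \emph{standard} normally-ordered expression is backwards. In the displayed formula the summation range is $\alpha < nh/2$, so the left factor $a^i_{hn-\alpha}$ carries the \emph{more positive} index; the modes of large positive degree sit on the \emph{left}, which is the opposite of the convention you state (``annihilation operator --- the one with more positive index --- on the right''). This reversal is deliberate (see the paper's Remark~1 following the proposition): the relevant module $\cO(A/A_+)$ is dual to a smooth one, so it is the \emph{negative} modes that act locally finitely, and the sum $\sum_{\alpha<nh/2} a^i_{hn-\alpha}a^{\bar i}_\alpha$ converges in the completion $\widehat{U}_k(\widehat{\mathfrak{a}}) = \varprojlim_n U_k/(\widehat{\mathfrak{a}}_n U_k)$ precisely because the left factor escapes to large positive degree as $\alpha\to-\infty$; with your ordering the sum would not lie in this completion at all. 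More importantly, the ordering controls the sign of the anomaly: with the standard convention the double contraction produces $+\frac{\dim\mathfrak{h}}{12}(n^3-n)$, whereas the proposition asserts $-\frac{\dim\mathfrak{h}}{12}(n^3-n)$ (central charge $-\dim\mathfrak{h}$, consistent with the value $k=-1$ used later for the right action on sections). Since you propose to ``first check this identification,'' you would presumably catch the discrepancy, but as written the reduction to your normally-ordered setting is the step that fails; the rest of your argument goes through once it is run with the ordering as displayed.
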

\begin{proof}
Let us assume that $h$, $n$ and $m$ are all odd for simplicity. Moreover, assume $m\geq0$. Other cases are treated similarly. We denote $C^{ij} =\langle a^i,a^j\rangle$.

Then we get
\begin{align*}
(kh)^2[L^S_n, L^S_m] &= \sum_{\alpha < hn/2, \beta<hm/2,i,j} [a^i_{hn-\alpha} a^{\bar{i}}_\alpha, a^j_{hm-\beta} a^{\bar{j}}_\beta]\\
&=k\sum_{\alpha,\beta,i,j} a^i_{hn-\alpha} a^{\bar{j}}_\beta\alpha\delta_{\alpha,\beta-hm}\delta_{ij} + a^{\bar{i}}_\alpha a^{\bar{j}}_\beta(hn-\alpha)\delta_{hn-\alpha,\beta-hm}C^{ij}\\
&+ a^j_{hm-\beta} a^{\bar{i}}_\alpha(hn-\alpha)\delta_{hn-\alpha,-\beta}\delta_{ij} + a^j_{hm-\beta} a^i_{hn-\alpha} \alpha\delta_{\alpha,-\beta}C^{\bar{i}\bar{j}}\\
&=k\sum_{\alpha<hn/2,i} a^i_{hn-\alpha} a^{\bar{i}}_{hm+\alpha}\alpha\delta_{\alpha<-hm/2} + a^{\bar{i}}_\alpha a^i_{hn+hm-\alpha}(hn-\alpha)\delta_{\alpha>hn+hm/2}\\
&+ a^i_{hm+hn-\alpha} a^{\bar{i}}_\alpha(hn-\alpha)\delta_{\alpha<hn+hm/2} + a^{\bar{i}}_{hm+\alpha} a^i_{hn-\alpha} \alpha\delta_{\alpha > -hm/2}
\end{align*}

We see that one may combine the terms to get
\[k\sum_{\alpha<hn/2,i} a^i_{hn-\alpha} a^{\bar{i}}_{hm+\alpha}\alpha + a^i_{hm+hn-\alpha} a^{\bar{i}}_\alpha(hn-\alpha).\]

Let's shift the index of summation in the first term:
\[k\sum_{\alpha<hn/2+hm,i} a^i_{hn+hm-\alpha} a^{\bar{i}}_{\alpha}(\alpha-hm) + k\sum_{\alpha<hn/2,i}a^i_{hm+hn-\alpha} a^{\bar{i}}_\alpha(hn-\alpha).\]

We want to make both sum to go up to $\alpha=h(n+m)/2$, so split off the necessary part in the first sum:
\begin{align*}
&k\sum_{\alpha<h(n+m)/2,i} a^i_{hn+hm-\alpha} a^{\bar{i}}_{\alpha}(\alpha-hm) + k\sum_{h(n+m)/2\leq\alpha<hn/2+hm,i} a^i_{hn+hm-\alpha} a^{\bar{i}}_{\alpha}(\alpha-hm)\\
&+ k\sum_{\alpha<hn/2,i}a^i_{hm+hn-\alpha} a^{\bar{i}}_\alpha(hn-\alpha).
\end{align*}

The extra piece of the sum combines with the last term to finally give
\begin{align*}
&k\sum_{\alpha<h(n+m)/2,i} a^i_{hn+hm-\alpha} a^{\bar{i}}_\alpha(hn-hm) + k\sum_i a^i_{h(n+m)/2}a^{\bar{i}}_{h(n+m)/2}h(n-m)/2\\
&- k^2\delta_{n,-m}\sum_{0<\alpha<hm/2,i}\alpha(\alpha-hm).
\end{align*}

The first two terms combine into $(kh)^2 (n-m)L^S_{n+m}$ (for $n\neq -m$), while the last sum is
\[S:=\sum_{0<\alpha<hm/2} \alpha(\alpha - hm) d_\alpha.\]

Since $d_\alpha$ depends on $\alpha$ only modulo $h$, let us substitute $\alpha = hb + l$. Then we get
\begin{align*}
S &= \sum_{-h/2<l<h/2}d_l\sum_{-l/h< b<m/2-l/h}(hb+l)(hb-hm+l)\\
&= \sum_{-h/2<l<h/2}d_l\sum_{-l/h < b < m/2-l/h}h^2b^2 + l(l-hm) + 2hbl - h^2mb\\
&= \sum_{0<l<h/2}d_ll(l-hm) + \sum_{-h/2<l<h/2}d_l\sum_{0<b<m/2-l/h}h^2b^2 + l(l-hm) + 2hbl - h^2mb\\
&= \sum_{0<l<h/2}d_ll(l-hm) + \sum_{-h/2<l<h/2}d_l \left(h^2\frac{m^2-1}{24}m + l(l-hm)\frac{m-1}{2} + \frac{m^2-1}{8}h(2l-hm)\right)
\end{align*}

Since $d_l = d_{-l}$, only even powers of $l$ survive in the last sum. Moreover, $\sum_l d_l = \dim\mathfrak{h}$. Therefore, we get
\[S=m\sum_{0<l<h/2}d_ll(l-h) - h^2\dim\mathfrak{h}\frac{m^2-1}{12}m.\]

The second term gives the usual Virasoro cocycle, while the first term can be absorbed into a redefinition of $L^S_0$.
\end{proof}

\begin{rem}
Note the unusual ordering in the Sugawara currents. We use this particular ordering as we will be interested in representations like $\cO(A/A_+)$, which are not highest-weight, but instead dual to the highest-weight ones (namely, the vacuum representation $U\mathfrak{a}/U\mathfrak{a}_+$). This is also reflected in the minus sign in front of the central charge.
\end{rem}

\begin{rem}
The Sugawara operators do not change if we multiply the bilinear form $\langle,\rangle$ on $\mathfrak{h}$ by a number.
\end{rem}

\subsubsection{}

We get an embedding of the Virasoro algebra into $\widehat{U}_k(\widehat{\mathfrak{a}})$, i.e. we have a map
\[U_{-\dim\mathfrak{h}}(\Vir)\rightarrow \widehat{U}_k(\widehat{\mathfrak{a}}).\]

Now we want to see that the adjoint action of the Sugawara currents on the Heisenberg coincides with the canonical Witt action.

\begin{prop}
The Sugawara currents $L^S_n$ of \autoref{prop:sugawara} have the following commutation relations with elements of $\mathfrak{a}$:
\[[L^S_n, a^i_\alpha] = -\frac{\alpha}{h}a^i_{\alpha + hn}.\]
\end{prop}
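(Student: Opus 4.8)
The plan is to compute the bracket $[L^S_n, a^i_\alpha]$ directly from the definition of the Sugawara currents in \autoref{prop:sugawara}, using only the Heisenberg relation $[a^i_\alpha, a^{\bar j}_\beta] = k\,\alpha\,\delta_{\alpha,-\beta}\delta_{ij}$ (with the dual-basis normalization $C^{ij}=\langle a^i,a^j\rangle$ entering the off-diagonal pairings), together with the Leibniz rule $[xy, a] = x[y,a] + [x,a]y$. Since the constant term $-\delta_{n,0}\tfrac{1}{4h^2}\sum_{0<l<h}d_l l(h-l)$ is central it drops out of the bracket, so only the two quadratic sums in $L^S_n$ contribute.

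First I would treat the generic sum $\tfrac{1}{kh}\sum_{\alpha'<nh/2,\,j} a^j_{hn-\alpha'} a^{\bar j}_{\alpha'}$. Applying Leibniz, the bracket with $a^i_\alpha$ produces two families of terms: one from pairing $a^i_\alpha$ against $a^{\bar j}_{\alpha'}$ (nonzero when $\alpha' = -\alpha$, forcing $j$ into the appropriate eigenspace) and one from pairing it against $a^j_{hn-\alpha'}$ (nonzero when $hn-\alpha' = -\alpha$, i.e. $\alpha' = hn+\alpha$). Each contributes a single monomial $a^i_{\alpha+hn}$ with a coefficient of the form $\tfrac{1}{kh}\cdot k\cdot(\text{index})$; the two index factors are $hn-\alpha'=hn+\alpha$ and $\alpha'=hn+\alpha$ respectively, but one must track whether the summation constraint $\alpha'<hn/2$ is met in each case, which is where the factor of $\tfrac12$ mismatch with the naive answer $-\tfrac{\alpha}{h}a^i_{\alpha+hn}$ gets resolved. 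Precisely, exactly one of $\alpha'=-\alpha$ and $\alpha'=hn+\alpha$ lies below $hn/2$ unless $\alpha = hn/2$, in which case both lie on the boundary; this is exactly the reason for the extra diagonal term $\tfrac{1}{2kh}\sum_i a^i_{hn/2}a^{\bar i}_{hn/2}$ in $L^S_n$, whose bracket with $a^i_{hn/2}$ supplies the missing half. Carefully combining the boundary contributions of both sums yields $[L^S_n, a^i_\alpha] = -\tfrac{\alpha}{h}a^i_{\alpha+hn}$ uniformly in $\alpha$.

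The main obstacle is precisely the bookkeeping at the summation boundary $\alpha' = hn/2$: one has to check that the "half-weighted" diagonal term is designed exactly so that the total coefficient interpolates correctly between the two regimes $\alpha < hn/2$ and $\alpha > hn/2$, and that there is no sign or factor-of-two discrepancy coming from the asymmetric ordering noted in Remark 1 (the operators $a^i_{hn-\alpha}$ are written to the left of $a^{\bar i}_\alpha$ regardless of degree). A clean way to organize this is to symmetrize: rewrite $\tfrac{1}{kh}\sum_{\alpha'<hn/2} a^j_{hn-\alpha'}a^{\bar j}_{\alpha'} + \tfrac{1}{2kh}\sum a^j_{hn/2}a^{\bar j}_{hn/2}$ as $\tfrac{1}{2kh}\sum_{\alpha'}{:}a^j_{hn-\alpha'}a^{\bar j}_{\alpha'}{:}$ over all $\alpha'$ with a normal-ordering that is symmetric about $hn/2$, compute the bracket of the manifestly symmetric expression (where the $\tfrac12$ and the doubled range of summation combine transparently), and only afterwards reintroduce the ordering — the ordering ambiguity contributes a central (hence bracket-irrelevant) correction. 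I expect the remaining manipulations — relabeling the summation index and collecting the two surviving monomials — to be routine, so once the boundary term is handled the identity follows immediately.
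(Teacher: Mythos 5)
Your proposal is correct and follows essentially the same route as the paper: a direct Leibniz-rule computation of $[L^S_n,a^i_\alpha]$ using $[a^i_\alpha,a^{\bar j}_\beta]=k\alpha\delta_{\alpha,-\beta}\delta_{ij}$, where exactly one of the two delta-contributions survives the cutoff $\alpha'<hn/2$; the paper simply sidesteps your boundary analysis by assuming $h$ and $n$ odd, whereas you correctly explain that the half-weighted diagonal term supplies the missing contribution in the even case. One small slip: the degenerate value where both candidate indices land on the boundary $hn/2$ is $\alpha=-hn/2$, not $\alpha=hn/2$, but this does not affect the argument.
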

\begin{proof}
For simplicity, we again assume that both $h$ and $n$ are odd. Then the commutator is
\begin{align*}
[L^S_n, a^i_\alpha] &= \frac{1}{kh}\sum_{\beta<nh/2, j}[a^j_{hn-\beta}a^{\bar{j}}_\beta, a^i_\alpha] \\
&= \frac{1}{h}\sum_{\beta < nh/2, j}(hn-\beta)\delta_{hn-\beta,-\alpha}C^{ji} a^{\bar{j}}_\beta + a^j_{hn-\beta}\beta\delta_{\beta,-\alpha}\delta_{ij}\\
&= -\frac{\alpha}{h} a^i_{\alpha+hn}.
\end{align*}
\end{proof}

\subsubsection{}

Let $L_n\in \Vir$ be the standard generators of the Virasoro algebra. The last two propositions give an embedding
\[S\colon U_{c'}(\Vir)\rightarrow \widehat{U}_{k,c}(\Vir\ltimes\widehat{\mathfrak{a}})\]
given by
\[S(L_n)=L_n - L_n^S.\]

Let us just remind the reader that the first $L_n\in\Vir$ refers to the generator of the Virasoro algebra while the second $L_n^S\in\widehat{U}_k(\widehat{\mathfrak{a}})$ refers to the Sugawara currents.

Indeed,
\begin{align*}
[S(L_n), S(L_m)] &= [L_n, L_m] + [L_n^S, L_m^S] - [L_n, L_m^S] - [L_n^S, L_m]\\
&= [L_n, L_m] - [L_n^S, L_m^S] \\
&= (n-m) S(L_{n+m}) + \delta_{n,-m}\frac{c + \dim\mathfrak{h}}{12}(n^3-n).
\end{align*}
Therefore, the central charge is
\[c' = c + \dim\mathfrak{h}.\]

Furthermore, the image of $S$ commutes with $\widehat{U}_k(\widehat{\mathfrak{a}})\subset \widehat{U}_{k,c}(\Vir\ltimes\widehat{\mathfrak{a}})$ as both $L_n$ and $L_n^S$ act in the same way on $\widehat{\mathfrak{a}}$.

\subsubsection{}

In this section $\cL$ will denote the basic ample line bundle on $\Bun_G(X)$. Let $\rho\in\mathfrak{h}^*$ be the half-sum of positive roots and $h^\vee$ the dual Coxeter number.

Recall a Borel-Weil theorem for loop groups \cite[Theorem 4]{Te}:
\begin{thm}[Teleman]
There is an isomorphism of $\widehat{L\mathfrak{g}}$-representations
\[H^0(\widehat{\Gr}_X, \cL) \cong \bigoplus_\lambda L(\lambda) \otimes H^0(\Bun_G(X), \cL\otimes \cV_\lambda),\]
where $L(\lambda)$ is a level 1 irreducible highest-weight representation with highest weight $\lambda\in\mathfrak{h}^*$, $\cV_\lambda$ is the evaluation bundle corresponding to the $\mathfrak{g}$-representation with highest weight $\lambda$ and the summation goes over the weights $\lambda$, such that $\lambda+\rho$ is inside the positive alcove at level $1 + h^\vee$.
\end{thm}

From now on we assume that $G$ is simply-laced. Let $\alpha_i\in\mathfrak{h}^*$ be the simple roots of $\mathfrak{g}$. Normalize the invariant pairing $\langle,\rangle$ on $\mathfrak{h}^*$ by the condition that $\langle\alpha_i, \alpha_i\rangle = 2$ for every simple root $\alpha_i$.

Then $\lambda + \rho$ is in the positive alcove if
\begin{align*}
\langle\theta, \lambda + \rho\rangle < 1+h^\vee,\\
\langle\alpha_i, \lambda + \rho\rangle > 0.
\end{align*}

Since $\langle\theta, \rho\rangle = h^\vee - 1$ and $\langle\alpha_i,\rho\rangle=1$, we see that the inequalities can be rewritten in the form
\begin{align*}
\langle\theta, \lambda\rangle\leq 1,\\
\langle\alpha_i, \lambda\rangle \geq 0.
\end{align*}
In other words, $\lambda$ is an integral dominant weight at level 1. We see that $H^0(\widehat{\Gr}_X, \cL)$ contains only level 1 highest-weight representations with integral dominant highest weights.

Kac and Peterson \cite[p. 288]{KP} prove that the space of invariants $L(\lambda)^{A_+}$ is finite-dimensional for every integral dominant weight $\lambda$ at level 1. Finally, since $X$ is projective, the spaces of conformal blocks $H^0(\Bun_G(X), \cL\otimes \cV_\lambda)$ are finite-dimensional.

The space of tau-functions can be written as
\[H^0(\Gr_X, \cL) \cong H^0(\widehat{\Gr}_X, \cL)^{A_+} \cong\bigoplus_\lambda L(\lambda)^{A_+} \otimes H^0(\Bun_G(X), \cL\otimes \cV_\lambda).\]

There are finitely-many summands on the right-hand side and each of them is finite-dimensional. Therefore, we conclude:
\begin{prop}
If $G$ is simply-laced, the space of tau-functions $H^0(\Gr_X, \cL)$ is finite-dimensional.
\end{prop}

\subsubsection{Example}

We have
\begin{align*}
H^0(\Bun_G(X), \cL\otimes \cV_\lambda) &\cong (H^0(LG/LG_+, \cL)\otimes V_\lambda)^{L\mathfrak{g}_-}\\
&= (L(0)^*\otimes V_\lambda)^{L\mathfrak{g}_-},
\end{align*}
where $L(0)=H^0(LG/LG_+, \cL)^*$ is the irreducible level 1 representation with the zero highest weight \cite[Proposition 2.11]{Ku}. We can write $L(0)$ as a quotient of the vacuum Verma module $V(0)$
\[L(0) = V(0) / I.\]

Decompose \[\widehat{L\mathfrak{g}}\cong \mathfrak{g}\llbracket z\rrbracket \oplus z^{-1}\mathfrak{g}[z^{-1}] \oplus \C.\] Then the vacuum module is
\[V(0) = \Ind^{\widehat{L\mathfrak{g}}}_{L\mathfrak{g}_+\oplus \C} (\C v_0)\cong U(z^{-1}\mathfrak{g}[z^{-1}]) v_0.\]

Let $X=\C\bP^1$. Consider the evaluation module $V_\lambda$ at the origin $z^{-1}=0$ and pick an element $s\in H^0(\Bun_G(X), \cL\otimes \cV_\lambda)$. We can split $L\mathfrak{g}_-\cong \mathfrak{g}\oplus z^{-1}\mathfrak{g}[z^{-1}]$. The Lie algebra $z^{-1}\mathfrak{g}[z^{-1}]$ annihilates $V_\lambda$, so from the invariance of $s$ under $z^{-1}\mathfrak{g}[z^{-1}]$ we see that $s\in L(0)^*\otimes V_\lambda$ is uniquely determined by its value $s(v_0)\in V_\lambda$. The invariance of $s$ under $\mathfrak{g}$ is equivalent to the statement that $s(v_0)$ is $\mathfrak{g}$-invariant. But $V_\lambda$ is irreducible, hence $s\neq 0$ only if $V_\lambda=\C$, i.e. $\lambda=0$.

We see that $H^0(\Bun_G(\C\bP^1), \cL\otimes \cV_\lambda)=0$ unless $\lambda=0$. Therefore, the space of tau-functions is
\[H^0(\Gr_X, \cL) \cong L(0)^{A_+}\otimes H^0(\Bun_G(\C\bP^1), \cL)\cong L(0)^{A_+}.\]

Kac and Peterson computed the dimension of $L(0)^{A_+}$ in terms of the so-called \textit{defect} of the monodromy of the Heisenberg $A$. For example, for the principal or homogeneous Heisenberg $\dim L(0)^{A_+} = 1$. For $G = SL_n$ this is true for any Heisenberg.

\begin{thm}
Let $G$ be simply-laced and $X=\C\bP^1$. Suppose one of the following holds:
\begin{enumerate}
\item The group $G=SL_n$.
\item The Heisenberg $A$ is $LG$-conjugate to the homogeneous or the principal Heisenbergs.
\end{enumerate}

Then the space of tau-functions $H^0(\Gr_X, \cL)$ is one-dimensional.
\end{thm}

\subsubsection{}

The Sugawara construction gives a map
\[S\colon U(H^0(D^\times, T_{D^\times}))\rightarrow \hat{U}_{k, c}(\Vir\ltimes\widehat{\mathfrak{a}}),\]
where $c=-\dim\mathfrak{h}$.

Any vector $v\in H^0(X_0, T_X)$ acts by endomorphisms on $H^0(\widehat{\Gr}_X, \cL)$. Moreover, the Virasoro central extension splits when restricted to $H^0(X_0, T_X)$. Therefore, we have an action of $S(v)$ on the sections $H^0(\widehat{\Gr}_X, \cL)$. Since the operators $S(v)$ commute with $A_+$, the action preserves the subspace of $A_+$-invariants $H^0(\Gr_X, \cL)\subset H^0(\widehat{\Gr}_X, \cL)$. In other words, we have an action of $H^0(X_0, T_X)$ on the space of tau-functions $H^0(\Gr_X, \cL)$.

We have the following lemma \cite[Lemma 2.5.1]{BFM}:
\begin{lm}[Beilinson--Feigin--Mazur]
The Lie algebra $H^0(X_0, T_X)$ of vector fields on a curve is simple. In particular, since it is infinite-dimensional, it has no nontrivial finite-dimensional representations.
\end{lm}

Therefore, the action of $S(v)$ on tau-functions is trivial and we get the following proposition:
\begin{prop}
Any vector field $v\in H^0(X_0, T_X)$ preserves the section $\sigma$.
\end{prop}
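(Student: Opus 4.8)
The plan is to leverage the boson-fermion correspondence (the Frenkel--Kac--Segal proposition just stated) together with the Sugawara embedding $S$ to reduce the statement to a cohomological triviality. First I would note that the space of conformal blocks $H^0(\Gr_X, \cL)$ is identified with $H^0(A_-\backslash A/A_+, \cL)$, which in the genus-0 simply-laced case is one-dimensional. The crucial structural observation is that for $v\in H^0(X_0, T_X)$, the operator $S(v) = v - L^S(v)$ acts on $H^0(\widehat{\Gr}_X, \cL)$, where $L^S(v)$ is the Sugawara expression built from the (right) action of $\widehat{\mathfrak{a}}$ at level $k=-1$. Because the Sugawara operators commute with $U_k(\widehat{\mathfrak{a}})$ (as established in the previous subsection), $S(v)$ preserves the subspace of $A_+$-invariants $H^0(\Gr_X, \cL)\subset H^0(\widehat{\Gr}_X, \cL)$.

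Next I would invoke the fact that $H^0(X_0, T_X)$ with $X=\C\bP^1$ is the Lie algebra $\mathfrak{sl}_2(\C)$ (vector fields on $\A^1$ regular at infinity of appropriate order), which is its own commutator subalgebra: every element of $\mathfrak{sl}_2(\C)$ can be written as $[a,b]$. Consequently, any one-dimensional representation of $H^0(X_0, T_X)$ is trivial, since the action factors through the abelianization $H^0(X_0,T_X)/[H^0(X_0,T_X), H^0(X_0,T_X)] = 0$. Since $H^0(\Gr_X, \cL)$ is one-dimensional and $S(v)$ acts on it, $S(v)$ must act by zero. Unwinding the definitions, $S(v)$ acting by zero on the section $\sigma$ means precisely $v\cdot\sigma = L^S(v)\cdot\sigma$, and since $L^S(v)$ is built from the right $\widehat{\mathfrak{a}}$-action which is how the tau-function transformation in the Heisenberg directions is encoded, this is equivalent to the asserted equality $\sigma(S(v)P) = S(v)\sigma(P)$ — i.e.\ the $S$-twisted action of $v$ on the point $P$ agrees with its action on the section.

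I expect the main obstacle to be bookkeeping rather than conceptual: one must carefully track the several group actions in play (the left $LG_-$-action defining $\Gr_X$, the right $A_+$-action being quotiented, and the right $\widehat{\mathfrak{a}}$-action lifting to $\cL$ at level $-1$) and confirm that $S(v)$ — which a priori is only a formal element of a completed enveloping algebra — genuinely defines an endomorphism of the \emph{finite-dimensional} space $H^0(\widehat{\Gr}_X, \cL)$ or at least of $H^0(\Gr_X,\cL)$. This requires knowing that the Sugawara currents act on $H^0(\widehat{\Gr}_X,\cL)$, which follows because this space is (a completion of) a dual of the vacuum module $U\widehat{\mathfrak{a}}/U\widehat{\mathfrak{a}}_+$ — exactly the subtlety flagged in Remark 1 regarding the unusual ordering and the sign of the central charge. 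Granting that, the cohomological vanishing $H^1(H^0(X_0,T_X),\C)=0$ does the rest with no further computation.

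One subtlety worth spelling out: the conclusion must be stated $A_+$-equivariantly, so I would phrase the final step as the observation that $S(v)$ descends to the one-dimensional quotient $H^0(\Gr_X,\cL)$ precisely because $[S(v), A_+]=0$, and then apply the triviality of one-dimensional $H^0(X_0,T_X)$-representations there; pulling back to $\widehat{\Gr}_X$ recovers the stated identity for the chosen $A_+$-invariant section $\sigma$.
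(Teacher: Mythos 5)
Your proposal follows essentially the same route as the paper: $S(v)$ commutes with $A_+$ and hence acts on the one-dimensional space $H^0(\Gr_X,\cL)$ of conformal blocks, and since $H^0(X_0,T_X)$ is perfect every one-dimensional representation is trivial, forcing $S(v)$ to act by zero. One small slip: $H^0(X_0,T_X)$ is not $\mathfrak{sl}_2(\C)$ (that is $H^0(X,T_X)$) but the full Lie algebra $\C[w]\partial_w$ of polynomial vector fields on $\A^1$; this does not affect the argument, since that algebra is also equal to its own commutator subalgebra.
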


This implies that the tau-function
\[\tau_P(g) = \frac{\sigma(g^{-1}P)}{g^{-1}\sigma(P)}\]
is invariant under the Sugawara currents acting on $g$ on the right. But since $g\in A$ and the Sugawara currents commute with $A$, it is also invariant under the Sugawara currents acting on the left.

Suppose $a\in A_+$. Then $\tau_P(ag) = c(a, g)\tau_P(g)$. Therefore, $A_+$ acts on $\tau_P(\cdot g)\in\cO(A/A_+)$ via multiplication by a function in $\cO(A/A_+)$. In contrast, elements of $A/A_+$ act by translations. Infinitesimally, it means that $\mathfrak{a}/\mathfrak{a}_+$ acts on $\tau_P(\cdot g)$ by vector fields, i.e. first-order differential operators.

Denote the projection $p\colon \cA_{stab}\rightarrow H^0(X_0, T_X)$. Combining the invariance of the tau-function under the Sugawara currents and vectors $a\in \cA_{stab}$ we get
\begin{thm}
For any vector $a\in \cA_{stab}$ we have a second-order differential equation on the tau-function:
\[(a - S(p(a)))\tau = 0.\]
\label{prop:tauvir}
\end{thm}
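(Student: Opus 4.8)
The plan is to assemble the conclusion from three ingredients already in place. First, recall that the extended tau-function $\overline{\tau}$ is annihilated by any $a\in\cA_{stab}$ because such an $a$ preserves the section $\sigma$ (it lies in $H^0(X_0,\cA_P)$ and hence acts trivially on $\sigma$ up to the central direction, which is killed by the $A_+$-invariance of $\sigma$ as in the algebro-geometric discussion). Second, by the preceding proposition, the Sugawara-corrected vector fields $S(v)$ for $v\in H^0(X_0,T_X)=\Vir_-$ also preserve $\sigma$, so the tau-function $\tau_P$ is invariant under the action of $S(v)$ on either side. Third, the decomposition $a = \cA^0\text{-part} + (\text{lift of }p(a))$ shows that the ``Virasoro components'' of $a$ are precisely captured by $p(a)\in H^0(X_0,T_X)$.

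The key steps, in order: (1) Write $a\in\cA_{stab}$ and observe $a\,\overline{\tau}=0$; restrict to the slice $\{P\}\times\widehat{A}/A_+$ to get an equation on $\tau_P$, but note that $a$ acts by a first-order operator in the Heisenberg directions \emph{plus} a genuine Virasoro vector field, so the naive equation $a\tau_P=0$ does not make sense on $\cO(A/A_+)$ alone. (2) Use the embedding $S:U_{c'}(\Vir)\to\widehat U_{k,c}(\Vir\ltimes\widehat{\mathfrak a})$ with $S(L_n)=L_n-L^S_n$ to replace the Virasoro part of the action on $H^0(\widehat{\Gr}_X,\cL)$ by the purely-Heisenberg operator $L^S_n$: concretely, the operator $a - S(p(a))$ has no component along the Virasoro direction of $\cA$, so it acts on $H^0(\widehat\Gr_X,\cL)$ through elements of (a completion of) $U_k(\widehat{\mathfrak a})$ together with the $\cA^0$-part of $a$. (3) Since $S(p(a))$ preserves $\sigma$ (the previous proposition) and $a$ preserves $\overline\tau$, the combined operator $a-S(p(a))$ annihilates $\overline\tau$; restricting to the slice and using that $L^S_n$ are second-order expressions $\tfrac{1}{kh}\sum a^i_{hn-\alpha}a^{\bar i}_\alpha+\cdots$ in the Heisenberg generators — which act on $\cO(A/A_+)$ by at most second-order differential operators, since $\mathfrak a/\mathfrak a_+$ acts by vector fields and $\mathfrak a_+$ by multiplication — we obtain that $(a-S(p(a)))\tau_P=0$ is a well-defined second-order differential equation on $\tau_P\in\cO(A/A_+)$.

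The main obstacle is step (2)–(3): making precise the claim that the combination $a-S(p(a))$, which a priori lives in a completed enveloping algebra of $\Vir\ltimes\widehat{\mathfrak a}$, actually acts on $\cO(A/A_+)$ by an honest (second-order) differential operator rather than something only formally defined. This requires knowing that the Sugawara sums $L^S_n$ converge when applied to $\tau_P(\cdot g)$, which holds because $\cO(A/A_+)$ is dual to the smooth vacuum module $U\mathfrak a/U\mathfrak a_+$ (this is exactly why the unusual ordering in \autoref{prop:sugawara} was chosen, cf. Remark 1), so only finitely many terms of each $L^S_n$ act nontrivially on any given vector. One must also track the central-charge bookkeeping: the Heisenberg acts on the right on $H^0(\widehat\Gr_X,\cL)$ with $k=-1$, and the residual central term $\tfrac{c+\dim\mathfrak h}{12}(n^3-n)$ in $[S(L_n),S(L_m)]$ must be checked to be consistent with the (projective) action on $\cL$ — but this is precisely the content of the line-bundle lift of the $\widehat{\cA}$-action discussed in \autoref{sect:tau}, so no new input is needed. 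Once convergence and the differential-operator interpretation are pinned down, the equation $(a-S(p(a)))\tau=0$ follows immediately by combining the two invariance statements.
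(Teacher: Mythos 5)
Your proposal follows the paper's own argument essentially verbatim: combine the invariance of the extended tau-function under $\cA_{stab}$ with the invariance of $\sigma$ (hence of $\tau$) under the Sugawara-corrected operators $S(v)$, which the paper deduces from the one-dimensionality of $H^0(\Gr_X,\cL)$ and the perfectness of $H^0(X_0,T_X)$, and then observe that $a-S(p(a))$ acts on $\cO(A/A_+)$ as a second-order differential operator because $\mathfrak{a}/\mathfrak{a}_+$ acts by vector fields, $\mathfrak{a}_+$ by multiplication, and $L^S_n$ is quadratic in the Heisenberg generators. Your additional remarks on convergence of the Sugawara sums and the choice of ordering are consistent elaborations of the paper's Remark~1 rather than a different route.
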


These are the famous Virasoro constrains of two-dimensional quantum gravity \cite{DVV}. For example, pick a global coordinate $z^{-1}$ on $\C\bP^1$ vanishing to the first order at $\infty$. Let $\{t^i_\alpha\}_{\alpha>0,i}\in\cO(A/A_+)$ be the time coordinates, so that $a^i_\alpha$ acts by $\partial/\partial t^i_{-\alpha}$ for $\alpha\leq0$, and it acts by $(-k\alpha)t^{\bar{i}}_\alpha$ for $\alpha > 0$.

Suppose $p(g)=z^2\frac{\partial}{\partial z}$ for some $g\in \cA_{stab}$. Suppose that the monodromy of the Heisenberg has order $h=2$. Then the operator
\[S(z^2\partial/\partial z) - g = L_1 - L_1^S - g =  \frac{1}{2}\sum_{\alpha\geq 0, i}(\alpha + 2)t^i_{2+\alpha}\frac{\partial}{\partial t^i_\alpha} + \frac{1}{4}\sum_i (t^i_1)^2 - \sum_{i,\alpha} g_{i,\alpha} a^i_\alpha\]
is known as the string operator, where $g_{i,\alpha}$ are some coefficients involved in the definition of $g$. Similarly,
\[S(z\partial/\partial z) - z^{-1}g = L_0 - L_0^S - z^{-1}g = \frac{1}{2}\sum_{\alpha > 0, i}\alpha t^i_\alpha\frac{\partial}{\partial t^i_\alpha} + \frac{1}{4}\sum_i\left(\frac{\partial}{\partial t^i_0}\right)^2 + \frac{d_1}{16} - \sum_{i,\alpha} g_{i,\alpha} a^i_{\alpha-2},\]
which is related to the dilaton operator.

\subsection{Differential side}
\label{sect:stringdiff}

\subsubsection{} 

The Grassmannian description of the Drinfeld--Sokolov hierarchy is a geometric reinterpretation of the original Drinfeld--Sokolov phase space, which parametrizes certain objects called affine opers. In this section we briefly recall what these are and explain what it means for an affine oper to be string. The reader is referred to \cite{BZF1} for a complete description.

Consider two groups $G$ and $K\subset G$ and let $\bO\subset\mathfrak{g}/\mathfrak{k}$ be a $K$-orbit for the adjoint action. Suppose $\cE\rightarrow C$ is a $G$-torsor and $\cE_K\rightarrow C$ a reduction to a subgroup $K\subset G$. We denote by $\cA_P$ and $\cA_{P_K}$ the corresponding Atiyah bundles.

\begin{defn}
We say that $\cE_K$ \textit{has relative position} $\bO$ with respect to a connection $\nabla\colon T_C\rightarrow \cA_\cE$ if the image of $\nabla$ is contained inside of $\cA_{\cE_K}$ up to elements of $\bO\times^K \cE_K$.
\end{defn}

If $\bO = 0$ it simply means that $\nabla$ is induced from a connection on $P_K$ in which case we say that $\cE_K$ is \textit{flat} with respect to $\nabla$.

\subsubsection{}
The \textit{Iwahori subgroup} $LG^+\subset LG_+$ is defined as the subgroup of loops which take values in a fixed Borel subgroup $B\subset G$ at the closed point.

Let $p_{-1} = \sum_{i=0}^n f_i\in L\mathfrak{g}_+$ be the principal nilpotent element and $\bO^{aff}$ the $\C^\times$-span of the $LG^+$-orbit of $p_{-1}$.

$D_t=\Spec \C\llbracket t\rrbracket$ will denote a formal disk; note that it is unrelated to the formal disk with a local coordinate $z$ in the definition of $LG$.

\begin{defn}
An \textit{affine oper} on $D_t$ is an $LG$-torsor $\cE\rightarrow D_t$ together with a connection $\nabla$, a flat reduction $\cE_{LG_-}$ to $LG_-$ and a reduction $\cE_{LG^+}$ to $LG^+$ in relative position $\bO^{aff}$ with respect to $\nabla$.
\end{defn}

Alternatively, an affine oper is a collection of the following data:
\begin{itemize}
\item A $G$-torsor $P\rightarrow X\times D_t$.

\item A $B$-reduction $P_B$ along $\infty\times D_t$.

\item A relative connection $\nabla_{oper}$ on $X_0\times D_t$ in the $D_t$ direction, with an asymptotic condition that $\nabla$ extends to a connection at $\infty$ preserving the $B$-reduction up to elements in $\bO^{aff}$.
\end{itemize}

Although the definition may seem complicated at first, it can be made quite explicit in certain cases. For example, if $X$ has genus 0, $G=GL_n\C$ and the underlying $G$-torsor on $X$ is trivial, this is simply an $n$-th order differential operator on $D_t$ with symbol 1.

The moduli space of affine opers on $D_t$ will be denoted by $\AOp_X(D_t)$. An important observation of Drinfeld and Sokolov is that affine opers have a unique $A_+$-reduction, where $A$ is the principal Heisenberg, which has relative position $\C^\times\cdot p_{-1}\in\mathfrak{a}/\mathfrak{a}_+$ with respect to the oper connection $\nabla_{oper}$.

Let $A_{-1}\subset A/A_+$ be the subgroup exponentiating the Lie algebra element $p_{-1}\in\mathfrak{a}$. Then for any point $P\in\Gr_X$ we have a family of $LG$-torsors on $A_{-1}$ together with reductions to $LG_-$ and $A_+$. Moreover, the action gives a connection $\nabla$.

\begin{prop}[Ben-Zvi--Frenkel]
Thus defined map $\Gr_X\rightarrow \AOp_X(A_{-1})$ is an isomorphism.
\end{prop}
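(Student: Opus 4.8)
The plan is to construct an inverse map $\AOp_X(A_{-1})\rightarrow \Gr_X$ and show that the two compositions are the identity; since both sides parametrize explicit geometric data, this is essentially a matter of bookkeeping once the right statements are invoked. Recall that a point of $\Gr_X$ is a coset in $LG_-\backslash LG/A_+$, which is the same as a $G$-torsor $P$ on $X$ with an $\underline{A}_+$-reduction $E$ near $\infty$; the forward map sends this to the family of $LG$-bundles (with reductions to $LG_-$ and $A_+$, and the connection $\nabla$) obtained by letting $A_{-1}\subset A/A_+$ act. The asymptotic (relative-position) condition for an affine oper is then automatic: the $A_+$-reduction has relative position $\C^\times\cdot p_{-1}$ precisely because translation by $A_{-1}$ infinitesimally adds $p_{-1}$, so the induced connection differs from one preserving the $A_+$-reduction by an element of $\C^\times\cdot p_{-1}\subset\mathfrak{a}/\mathfrak{a}_+$. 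Hence the forward map is well-defined into $\AOp_X(A_{-1})$.

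For the inverse, I would start with an affine oper $\cE$ on $A_{-1}$ and use Drinfeld--Sokolov's uniqueness statement (quoted in the excerpt just before the Proposition): an affine oper has a \emph{unique} $A_+$-reduction in relative position $\C^\times\cdot p_{-1}$, where $A$ is the principal Heisenberg. Restricting the whole package to the basepoint $e\in A_{-1}$ gives a $G$-torsor on $X$, a reduction to $LG_-$ near $\infty$ (the flat reduction), and the $A_+$-reduction supplied by the uniqueness result — which is exactly the data of a point of $\Gr_X = LG_-\backslash LG/A_+$. One must check that the oper connection $\nabla$ in the $A_{-1}$-direction is reconstructed as the infinitesimal $A_{-1}$-action on this point: this is where the relative-position condition $\C^\times\cdot p_{-1}$ is used in the reverse direction, identifying the one-parameter family $\cE$ over $A_{-1}$ with the $A_{-1}$-orbit of its fiber at $e$. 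Finally, the two compositions are identities because in each direction the reconstructed object is manifestly the one we started with: going $\Gr_X\to\AOp_X(A_{-1})\to\Gr_X$ recovers the original coset by restricting to $e$, and going $\AOp_X(A_{-1})\to\Gr_X\to\AOp_X(A_{-1})$ recovers the original oper because the $A_{-1}$-action exponentiates $\nabla$.

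The main obstacle is the middle step: verifying that the oper connection $\nabla$ genuinely coincides with the connection coming from the $A_{-1}$-action on $\Gr_X$, rather than merely being compatible with it up to something harmless. Concretely, one needs that an affine oper on $A_{-1}$ is rigid in the $A_{-1}$-direction — i.e. the family is pinned down by its fiber at $e$ together with the requirement that the $A_+$-reduction have relative position $\C^\times\cdot p_{-1}$ — so that no extra moduli are lost or gained. This follows from the fact that $A_{-1}$ is one-dimensional and the relative connection is a connection along $A_{-1}$ with prescribed singular/relative behaviour at $\infty$, but making the identification canonical requires care with the $\C^\times$-ambiguity in $\bO^{aff}=\C^\times\cdot p_{-1}$ and with the normalization of the coordinate on $A_{-1}$ exponentiating $p_{-1}$. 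Once that rigidity is established, the bijection of the underlying sets upgrades to an isomorphism of moduli stacks because every construction above is functorial in the test scheme $S$.
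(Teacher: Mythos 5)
First, note that the paper offers no proof of this statement: it is quoted from Ben-Zvi--Frenkel \cite{BZF1}, so there is nothing internal to compare your argument against, and I will judge it on its own terms and against the strategy of \emph{loc.\ cit.} Your overall plan --- define the inverse by restricting an oper to the basepoint $e\in A_{-1}$, and use the Drinfeld--Sokolov uniqueness of the $A_+$-reduction to make that restriction land in $LG_-\backslash LG/A_+$ --- is the right one and is essentially how the result is established there. But two points need repair. First, you conflate the two reductions: the definition of affine oper in this paper asks for an \emph{Iwahori} reduction $\cE_{LG^+}$ in relative position $\bO^{aff}$ (the $\C^\times$-span of the $LG^+$-orbit of $p_{-1}$), not an $A_+$-reduction in position $\C^\times\cdot p_{-1}$. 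Your forward map must therefore produce an $LG^+$-reduction (it is induced from the $A_+$-reduction via $A_+\subset LG^+$, which holds for the principal Heisenberg because $\mathfrak{a}$ has no degree-zero part), and your inverse must pass from the given $LG^+$-reduction to an $A_+$-reduction; that passage is exactly the Drinfeld--Sokolov gauge-fixing lemma (conjugating $\nabla$ into the form $\partial_t+p_{-1}+\mathfrak{a}_+$ by a unique unipotent element), which is the only non-formal computation in the whole proof and which you cite but do not deploy where it is actually needed.

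Second, the ``rigidity'' step that you correctly single out as the crux is only asserted, and the justification offered ($A_{-1}$ is one-dimensional, the connection has prescribed behaviour) is not an argument: a connection on a torsor over a formal disk is extra data, not something determined by the fiber at the origin. The actual argument runs as follows: once the $A_+$-reduction is in place, the relative position condition says $\nabla=\nabla'+c(t)\,p_{-1}$ with $\nabla'$ preserving the $A_+$-reduction and $c(t)$ invertible; parallel transport for $\nabla'$, together with flatness of the $LG_-$-reduction, canonically trivializes the family over $A_{-1}$ and identifies it with the orbit of its fiber at $e$ under the one-parameter flow generated by $c(t)p_{-1}$; normalizing $c\equiv 1$ (equivalently, fixing the coordinate on $A_{-1}$, which is the $\C^\times$-ambiguity you flag but do not resolve) then identifies this with the $A_{-1}$-orbit map. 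With that spelled out both composites are the identity and every step is functorial in the test scheme $S$, so the strategy does go through; as written, however, the heart of the proof is missing.
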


Suppose now that $P\in\Gr_X$ is a string point. Then there is a connection $\nabla_{string}$ on $P\rightarrow X_0$, which preserves the reduction to $A$ near infinity. Hence, the oper connection $\nabla_{oper}$ is flat with respect to $\nabla_{string}$, i.e. together they form an absolute flat connection on $X_0\times A_{-1}$. Combining this with \autoref{prop:virconnection} we get a converse statement.

\begin{thm}
An affine oper $(P, P_B, \nabla)$ is string iff the relative oper connection $\nabla_{oper}$ on $X_0\times A_{-1}$ extends to an absolute flat connection preserving the $A$-reduction near infinity.
\end{thm}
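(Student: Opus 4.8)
The plan is to unwind the dictionary between the Grassmannian and the moduli of affine opers established in the Ben-Zvi--Frenkel isomorphism $\Gr_X\cong\AOp_X(A_{-1})$, and to translate the geometric characterization of string points of \autoref{prop:virconnection} through it. Recall that an affine oper on $A_{-1}$, viewed in the second ("spread-out") description, is the data of a $G$-bundle $P\to X\times A_{-1}$, a $B$-reduction along $\infty\times A_{-1}$, and a relative connection $\nabla_{oper}$ in the $A_{-1}$-direction on $X_0\times A_{-1}$ with the asymptotic condition near $\infty$. The point $P\in\Gr_X$ recovering this oper is obtained by applying the $A_{-1}$-action to $P$, so the relative connection $\nabla_{oper}$ is precisely the infinitesimal action of the subgroup $A_{-1}\subset A/A_+$ exponentiating $p_{-1}$.

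The forward direction is essentially already contained in the paragraph preceding the statement, and I would simply record it: if $P\in\Gr_X$ is string, \autoref{prop:virconnection} produces a connection $\nabla_{string}$ on $P\to X_0$ preserving the $A$-reduction near infinity (equivalently, landing in $H^0(D^\times,\cA_E)$), with $\nabla^{string}_{z\partial/\partial z}$ inducing the canonical gradation. Since $\nabla_{string}$ is a genuine connection on $X_0$ (flat automatically, as $X_0$ is a curve) and it commutes with the Drinfeld-Sokolov flow along $A_{-1}$ — because $p_{-1}\in\mathfrak{a}$ lies in $H^0(D^\times,\ad E)$ and $\nabla_{string}$ preserves $\ad E$, so $[\nabla^{string}_v,p_{-1}]\in\mathfrak{a}$ and in fact the bracket is controlled by the canonical action — the pair $(\nabla_{string},\nabla_{oper})$ assembles into an absolute connection on $X_0\times A_{-1}$. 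Flatness of the absolute connection follows since each of the two "partial" connections is flat and they commute; and by construction it preserves the $A$-reduction near infinity. The only point requiring a line of care is that the oper asymptotic condition (relative position $\C^\times\cdot p_{-1}$ for the $B$-reduction) is compatible with — indeed implied by — preservation of the $A$-reduction, which is exactly the Drinfeld--Sokolov observation that the $A_+$-reduction of an oper has relative position $\C^\times\cdot p_{-1}\in\mathfrak{a}/\mathfrak{a}_+$.

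For the converse — the genuinely new content — suppose the relative oper connection $\nabla_{oper}$ on $X_0\times A_{-1}$ extends to an absolute flat connection $\widetilde\nabla$ preserving the $A$-reduction near infinity. Restricting $\widetilde\nabla$ to the fiber $X_0\times\{pt\}$ gives a connection $\nabla_{string}$ on $P\to X_0$. The hypothesis that $\widetilde\nabla$ preserves the $A$-reduction near infinity says exactly that $\nabla_{string}$, evaluated on $D^\times$, lands in $H^0(D^\times,\cA_E)\subset H^0(D^\times,\cA_P)$. It remains to check that $\nabla^{string}_{z\partial/\partial z}$ induces the \emph{canonical} gradation on $\mathfrak{a}$ rather than some other lift of the Witt action; this is where I expect the main obstacle, and the resolution should be that flatness of $\widetilde\nabla$ together with compatibility with $\nabla_{oper}$ pins down the Witt action on $\mathfrak{a}$: the commutator $[\nabla_{string},p_{-1}]$ is forced (via the flatness $[\widetilde\nabla_{X\text{-dir}},\widetilde\nabla_{A_{-1}\text{-dir}}]=0$ and the explicit form of $\nabla_{oper}$ as the $p_{-1}$-flow) to agree with the canonical derivation of $\mathfrak{a}$, and since $\mathfrak{a}$ is generated over $\C((z))$ by its behaviour under the two operations of multiplication by $z$ and bracketing with $p_{-1}$, the full Witt action, hence the $L_0$-gradation, is the canonical one. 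Having produced such a $\nabla_{string}$, \autoref{prop:virconnection} immediately gives that $P$ is string, and — via the identification $\Gr_X\cong\AOp_X(A_{-1})$ — that the affine oper $(P,P_B,\nabla)$ is string. A small amount of bookkeeping is needed to see that the notion of "string" for an affine oper matches the notion for the corresponding point of $\Gr_X$ under the Ben-Zvi--Frenkel isomorphism, but since that isomorphism is $A$-equivariant and intertwines the flows, this is formal.
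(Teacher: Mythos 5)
Your overall route is the same as the paper's, which in fact offers little more than a sketch: the forward direction is exactly the preceding paragraph's observation that $\nabla_{string}$ (supplied by \autoref{prop:virconnection}) and $\nabla_{oper}$ assemble into an absolute flat connection on $X_0\times A_{-1}$, and the converse is obtained by restricting the absolute connection to a fiber $X_0\times\{pt\}$ and feeding the result back into \autoref{prop:virconnection}. The one place where you add substance of your own --- the verification that the restricted connection induces the \emph{canonical} gradation on $\mathfrak{a}$ --- is the place where your argument goes astray, though harmlessly, because the step you flag as ``the main obstacle'' is not an obstacle at all. If $\widetilde\nabla$ preserves the $A$-reduction near infinity, its restriction lands in $H^0(D^\times,\cA_E)\subset H^0(D^\times,\cA_P)$, so for every vector field $v$ the operator $\nabla^{string}_v$ is itself a lift of $v$ to $\cA_E$; by the very definition of the canonical connection $\nabla^{ad}$ on $\ad E$ (which is well defined precisely because any two such lifts differ by an element of the abelian $\ad E$), the induced derivation $[\nabla^{string}_v,-]$ of $\mathfrak{a}$ \emph{is} the canonical one, with no further input needed. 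Your proposed substitute does not work as written: $\mathfrak{a}$ is abelian, so ``bracketing with $p_{-1}$'' is the zero operation on $\mathfrak{a}$ and cannot help generate anything, and the flatness relation $[\widetilde\nabla_{X\text{-dir}},\widetilde\nabla_{A_{-1}\text{-dir}}]=0$ constrains only the single commutator $[\nabla^{string}_v,p_{-1}]$, which is far from determining the Witt action on all of $\mathfrak{a}$. Replacing that paragraph by the one-line observation above makes your proof complete and puts it in exact agreement with the paper's intended argument.
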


\end{document}